\documentclass[12pt]{article}

\usepackage[utf8]{inputenc}
\usepackage[english]{babel}

\usepackage[T1]{fontenc}

\usepackage{verbatim}

\usepackage{hyperref}
\usepackage{amsmath,amsfonts,amssymb,amsthm,amscd}
\usepackage{nccmath}
\usepackage{mathrsfs,indentfirst,latexsym,float}
\usepackage{amscd}
\usepackage{eucal}
\usepackage{fancyhdr}
\usepackage[pdftex]{graphicx}
%\usepackage[small,nohug,heads=vee]{diagrams}
%\diagramstyle[labelstyle=\scriptstyle]
\numberwithin{equation}{section}

\frenchspacing

\textwidth=13.5cm
\textheight=23cm
\parindent=16pt
\oddsidemargin=-0.5cm
\evensidemargin=-0.5cm
\topmargin=-0.5cm

\newtheorem{theorem}{Theorem}[section]
\newtheorem{lemma}[theorem]{Lemma}
\newtheorem{proposition}[theorem]{Proposition}
\newtheorem{corollary}[theorem]{Corollary}
\newtheorem{question}[theorem]{Question}

\theoremstyle{definition}
\newtheorem{definition}[theorem]{Definition}

\newtheorem{remark}[theorem]{Remark}

 % \sloppy
  %\allowdisplaybreaks
%% Algunos comandos

\newcommand{\N}{\mathbb{N}} %% Conjunto naturales:     \N
\newcommand{\Z}{\mathbb{Z}} %% Conjunto enteros:       \Z
 %% Conjunto reales:        \R
 %% Conjunto complejos:     \C
 %% Disco unidad:           \D

 %% Cuerpo		           \K
\newcommand{\A}{\mathcal{A}}
\newcommand{\B}{\mathcal{B}}

\newcommand{\PS}{\mathcal{PS}}
\newcommand{\Synd}{\mathcal{S}}
\newcommand{\IP}{\mathcal{IP}}
\newcommand{\T}{\mathcal{T}}

\newcommand{\I}{\mathcal{I}}

\newcommand{\F}{\mathscr{F}}
\newcommand{\Ft}{\widetilde{\mathscr{F}}}

\newcommand{\Lin}{\mathcal{L}}

% Absolute value notation
\newcommand{\abs}[1]{\left\lvert#1\right\rvert}

% Norm notation
\newcommand{\norm}[1]{\lVert#1\rVert}

\baselineskip=17pt

\begin{document}

\title{Strong transitivity properties for operators}
% Authors
\author{J. B\`{e}s \footnote{Department of Mathematics and Statistics, Bowling Green State University, Bowling Green, OH 43403, USA. e-mail: jbes@math.bgsu.edu }, Q. Menet\footnote{Univ. Artois, EA 2462, Laboratoire de Mathématiques de Lens (LML), F-62300 Lens, France. e-mail: quentin.menet@univ-artois.fr},  A. Peris\footnote{Departament de Matem\`{a}tica Aplicada, IUMPA, Universitat Polit\`{e}cnica de Val\`{e}ncia, Edifici 7A, 46022 Val\`{e}ncia, Spain. e-mail: aperis@mat.upv.es} and Y. Puig\footnote{Department of Mathematics, Ben-Gurion University of the Negev, P.O.B. 653, 84105 Beer Sheva, Israel. e-mail: puigdedios@gmail.com}}

\date{}
\maketitle

\begin{abstract}
Given a Furstenberg family $\F$ of subsets of $\N$, an operator $T$ on a topological vector space $X$  is called $\F$-transitive provided for each non-empty open subsets $U$, $V$ of $X$ the set $\{n\in \Z_+ : T^n(U)\cap V\neq\emptyset\}$ belongs to $\F$. We classify the topologically transitive operators with a hierarchy of $\F$-transitive subclasses by considering families $\F$ that are determined by various notions of largeness and density in $\Z_+$.
  \end{abstract}

\section{Introduction}

Throughout this paper $X$ denotes a topological space and $\mathcal{U}(X)$ the set of non-empty open subsets of $X$.
When $X$ is a topological vector space, $\Lin (X)$ stands for the set of operators (i..e,  linear and continuous self-maps) on $X$. An operator $T\in \Lin (X)$ is called \emph{hypercyclic} if there exists a vector $x\in X$ such that
for each $V$ in $\mathcal{U}(X)$ the time return set
 \[
 N_T(x,V)=N(x, V):=\{n\geq 0: T^nx\in V\}
 \]
is non-empty, or equivalently (since $X$ has no isolated points) an infinite set. When $X$ is an $F$-space (that is, a complete and metrizable topological vector space), we know thanks to Birkhoff's transitivity theorem that $T$ is hypercyclic if and only if it is \emph{topologically transitive}, that is, provided
\[
N_T(U,V)=N(U, V):=\{n\geq 0: T^n(U)\cap V\neq \emptyset\}
\]
 is infinite for every $U, V\in \mathcal{U}(X)$.

Since 2004, several refined notions of hypercyclicity based on the properties of time return sets $N(x, V)$ have been investigated: frequent hypercyclicity~\cite{BG1,BG}, $\mathcal{U}$-frequent hypercyclicity~\cite{Sh1,BoGr16}, reiterative hypercyclicity \cite{BMPP}. More recently a general notion called $\mathcal{A}$-hypercyclicity, which generalizes the abovementioned  notions of hypercyclicity, has been used to investigate the different types of hypercyclic operators, see~\cite{BMPP,BoGr16}.

Our aim here is to investigate refined notions of topological transitivity based on
properties satisfied by the return sets $N(U,V)$. Some of these are already well-known, such as the topological notions of \emph{mixing}, \emph{weak-mixing}, and \emph{ergodicity}, say. Recall that a continuous self-map $T$ on $X$ is called \emph{mixing} provided $N(U, V)$ is cofinite for each $U, V\in\mathcal{U}(X)$. Also,  $T$ is called \emph{weakly mixing} whenever $T\times T$ is topologically transitive on $X\times X$, and this occurs precisely when the return set $N(U, V)$ is \emph{thick} (i.e. contains arbitrarily long intervals) for each $U, V\in\mathcal{U}(X)$   \cite{PS05}.
Finally,  $T$ is \emph{topologically ergodic}  provided  $N(U, V)$ is syndetic (i.e. has bounded gaps)
for each  $U, V\in \mathcal{U}(X)$. It is known that topologically ergodic operators are weakly mixing \cite{GrPe10}. The above mentioned notions may be stated through the concept of a (Furstenberg) family. The symbols $\mathbb{Z}$ and $\mathbb{Z}_+$ denote the sets of integers and of positive integers, respectively.

\begin{definition}
We say that a non-empty collection $\F$ of subsets of $\Z_+$ is a \emph{family}  provided that each set $A\in \F$ is infinite and that $\F$  is hereditarily upward (i.e. for any $A\in \F$, if $ B\supset A$ then $B\in \F$).
 The \emph{dual} family $\F^*$ of $\F$ is defined as the collection of subsets $A$ of $\Z_+$ such that  $A\cap B\neq\emptyset$ for every $B\in\F$.
\end{definition}

 Some standard families are the following: The family $\I$ of infinite sets, whose dual family $\I^*$ coincides with the family of cofinite sets. The family $\mathcal{T}$ of thick sets, whose dual family is $\mathcal{S}=\mathcal{T}^*$, the family of syndetic sets.
For a topologically transitive map $T$ a distinguished family is
 \[
 \mathcal{N}_T:=\{ A\subset \Z_+ : N_T(U,V)\subseteq A \mbox{ for some } U, V\in\mathcal{U}(X)\} .
 \]

From now on the symbol $\F$ will always denote a family.

\begin{definition}
\label{Fop}
 We say that a continuous map $T$ on $X$ is \emph{$\F$-transitive} (or an $\F$-map, for short) provided
 $\mathcal{N}_T\subset \F$, that is, provided $N(U, V)\in \F$ for each $U, V\in \mathcal{U}(X)$.
If in addition $X$ is a topological vector space and $T\in \Lin (X)$ we call $T$ an \emph{$\F$-transitive operator} (or $\F$-operator for short).
\end{definition}

Hence  the  $\I$-operators are precisely those operators which are  topologically transitive, and
the $\I^*$-operators and $\T$-operators are precisely those which are mixing and weak mixing, respectively. The $\mathcal{T}^*=\mathcal{S}$-operators, that is, the  {topologically ergodic} operators.

We present here some new classes of topologically transitive operators by considering families $\F$ defined in terms of various notions of density and largeness in $\Z_+$. A  hierarchy of fourteen  classes (which include the earlier mentioned classes defined by properties of return sets $N(x, V)$) appears  in Figure~\ref{fig:fig2} and summarizes our findings. We stress that while trivially any $\F_1$-map is an $\F_2$-map when $\F_1\subset \F_2$,  it is possible that the classes of $\F_1$-operators and of $\F_2$-operators coincide even if $\F_1$ is strictly contained in $\F_2$ (see e.g., Proposition~\ref{synd.op.is.hered.synd}).

The paper is organized as follows. In Section~\ref{ftrans} we describe some general facts about families  $\F$ and their corresponding $\F$-transitive maps and operators. In Theorem~\ref{prop.Fcrit} we provide an extension of the Hypercyclicity Criterion that ensures an operator to be $\F$-transitive. We apply this criterion in Section~\ref{fshifts} to characterize $\F$-transitivity among unilateral and bilateral weighted backward shift operators on $c_0$ and $\ell_p$ $(1\le p<\infty)$ spaces. To illustrate, we establish in Corollary~\ref{teuws} that a unilateral backward shift $B_w$ is topologically ergodic precisely when its weight sequence $w=(w_n)_n$ satisfies that each set
\[
A_M=\{ n:   |\prod_{j=1}^n w_j |> M \}        \quad (M>0)
\]
is syndetic.
 Section~\ref{return} is dedicated to $\F$-operators induced by families $\F$ given by sets of positive or full  (lower or upper) asymptotic density or Banach density.
 In Section~\ref{filt}, we look at $\F$-operators induced by families $\F$ commonly used in Ramsey theory, and we compare the classes that we obtain with the class of reiteratively hypercyclic operators (Subsection~\ref{ahyp}). Some natural questions conclude the paper.

 % NEW SECTION

 \section{$\F$-Transitivity}\label{ftrans}

In this section we introduce a sufficient condition for an operator to be an $\F$-operator, which we call the $\F$-Transitivity Criterion, and it is in the same vein of the Hypercyclicity Criterion. Moreover, we will study the notion of hereditarily $\F$-operator.

We will be interested in the following  three special properties a family $\F$ can have: being a \emph{filter}, being \emph{partition-regular}, and being \emph{shift-invariant}. We use the following notation: given two families $\F_1$ and $\F_2$
 \[
 \F_1\cdot \F_2:=\{ A\cap B : A\in\F_1, \ \ B\in \F_2\}.
 \]
Obviously,  $\F_1\subset \F_1\cdot \F_2$ and
$\F_2\subset \F_1\cdot \F_2$.
A family $\F$ is a \emph{filter} provided it is invariant under finite intersections (i.e., provided $\F \cdot \F\subset \F$). Say, the family $\I^*$  of cofinite sets is a  filter while  the families $\I$ and $\mathcal{S}$ of infinite sets and of syndetic sets are not.

The second property, that of being partition regular, will be useful for us
to identify filters. A family $\F$ on $\Z_+$ is said to be \emph{partition regular} if for every $A\in\mathscr{F}$ and any finite partition $\{A_1, \dots, A_n\}$ of $A$, there exists some $i=1,\dots ,n$ such that $A_i\in \mathscr{F}$. The family $\I$ is an example of partition regular family, while the families $\I^*$, $\mathcal{T}$ and $\mathcal{S}$ are not. Later we will see other examples of partition regular families: the family of piecewise syndetic sets (see Remark~\ref{remfcrit}), the family of sets with positive upper (Banach) density (see Section~\ref{return}), the families of $\Delta$-sets and of $\IP$-sets (see Section~\ref{filt}).

   \begin{lemma}
   \label{part.reg}
   Given a family  $\F$, the following are equivalent:
   \begin{enumerate}
    \item[(I)] $\F$ is partition regular,
    \item[(II)] $A\cap A'\in \F$ for every $A\in \F$ and $A'\in \F^*$ (i.e., $\F\cdot \F^*\subset \F$),
    \item[(III)] $\F^*$ is a filter.
    \end{enumerate}
   \end{lemma}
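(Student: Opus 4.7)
I would prove the equivalence cyclically: (I) $\Rightarrow$ (II) $\Rightarrow$ (III) $\Rightarrow$ (I), since the first two implications are short and the third is where the real content sits.

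For (I) $\Rightarrow$ (II), I would fix $A\in\F$ and $A'\in\F^*$ and split $A$ into the two disjoint pieces $A\cap A'$ and $A\setminus A'$. Partition regularity forces at least one of them into $\F$; but if $A\setminus A'$ were in $\F$, then the definition of $\F^*$ would require $A'\cap(A\setminus A')\neq\emptyset$, an absurdity. So $A\cap A'\in \F$. (Empty blocks are harmless: they cannot lie in $\F$ since $\F$ consists of infinite sets, and if one block is empty the other equals $A$.)

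For (II) $\Rightarrow$ (III), I would take $A',B'\in\F^*$ and verify $A'\cap B'\in \F^*$ by intersecting with an arbitrary $C\in \F$. Property (II) applied to $C\in\F$ and $A'\in\F^*$ gives $C\cap A'\in \F$, and then the definition of $\F^*$ applied to $B'$ and $C\cap A'$ yields $(C\cap A')\cap B'\neq\emptyset$; hence $C\cap(A'\cap B')\neq\emptyset$ for every $C\in\F$, as required.

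The main substance is in (III) $\Rightarrow$ (I), and the key observation I would highlight is the following bridge between non-membership in $\F$ and membership in $\F^*$: if $S\notin \F$, then $\Z_+\setminus S\in \F^*$. Indeed, because $\F$ is hereditarily upward, any $C\in \F$ contained in $S$ would force $S\in\F$; thus every $C\in\F$ meets $\Z_+\setminus S$. Granted this, assume $A\in\F$ admits a finite partition $A=A_1\cup\cdots\cup A_n$ with no $A_i$ in $\F$. Setting $B_i:=\Z_+\setminus A_i$, the observation yields $B_i\in\F^*$ for each $i$, and the filter property of $\F^*$ gives
\[
B_1\cap\cdots\cap B_n \;=\; \Z_+\setminus(A_1\cup\cdots\cup A_n) \;=\; \Z_+\setminus A \;\in\; \F^*.
\]
But $A\in\F$ and $A\cap(\Z_+\setminus A)=\emptyset$ contradicts $\Z_+\setminus A\in\F^*$. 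So some $A_i$ must lie in $\F$, establishing partition regularity.

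The only delicate point is the bridge lemma in the last step; once one notices that hereditary upward closure is exactly what converts $A_i\notin\F$ into $\Z_+\setminus A_i\in\F^*$, the whole argument becomes formal set-theoretic manipulation of the dual.
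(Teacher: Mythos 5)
Your proof is correct. The implications (I) $\Rightarrow$ (II) and (II) $\Rightarrow$ (III) are essentially identical to the paper's: the same splitting of $A$ into $A\cap A'$ and $A\setminus A'$, and the same associativity trick $A\cap(A'\cap B')=(A\cap A')\cap B'$. Where you genuinely diverge is in (III) $\Rightarrow$ (I). The paper reduces to a two-block partition (leaving the general case to ``an inductive process''), assumes separate witnesses $A',B'\in\F^*$ missing $A_1$ and $A_2$ respectively, and intersects them to contradict $A\cap(A'\cap B')\neq\emptyset$; it invokes $\F=\F^{**}$, which it only justifies in the paragraph \emph{after} the lemma. You instead isolate the bridge ``$S\notin\F$ implies $\Z_+\setminus S\in\F^*$'' (which is the contrapositive heart of the inclusion $(\F^*)^*\subset\F$, proved directly from upward heredity), complement each block, and intersect all $n$ complements at once to land on $\Z_+\setminus A\in\F^*$, contradicting $A\in\F$. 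Your route handles arbitrary finite partitions in one stroke with no induction on the number of blocks, and it makes the dependence on $\F=\F^{**}$ explicit and self-contained rather than deferred; the paper's version is slightly more economical in notation but leaves those two details to the reader. Your asides on empty blocks are correct and harmless (an empty block gives $B_i=\Z_+\in\F^*$, so nothing breaks).
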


   \begin{proof}
   (I) $\Longrightarrow$ (II): Given $A\in \F$ and $A'\in \F^*$ it is clear that $A\cap A'\neq \emptyset$ by definition of dual family. Since $(A\cap A')\cup (A\setminus A')=A$, either $A\cap A'\in \F$ or $A\setminus A'\in\F$ by (I). Since $(A\setminus A')\cap A'=\emptyset$, by definition of dual family we necessarily have $A\cap A'\in\F$.

    (II) $\Longrightarrow$ (III): For arbitrary $A',B'\in \F^*$ and $A\in \F$, by applying (II) and the definition of dual family we have $A\cap (A'\cap B')=(A\cap A')\cap B'\neq \emptyset$, which yields that $\F^*$ is a filter.

     (III) $\Longrightarrow$ (I): We will just show that, given $A\in \F$ and $A_1,A_2\subset \Z_+$ such that $A_1\cup A_2=A$ and $A_1\cap A_2=\emptyset$, then either $A_1\in \F$ or $A_2\in \F$. The general case can be deduced by an inductive process. Since $\F=\F^{**}$, we need to show that $A_i\cap A'\neq \emptyset$ for every $A'\in\F^*$, for $i=1$ or $i=2$. Suppose that there exist $A',B'\in \F^*$ with $A_1\cap A'=\emptyset$ and $A_2\cap B'=\emptyset$. Since $\F^*$ is a filter, then $C':=A'\cap B'\in \F^*$. Thus,
     \[
     \emptyset \neq A\cap C'\subset (A_1\cap A')\cup (A_2\cap B')=\emptyset ,
     \]
     which is a contradiction.
  \end{proof}

Notice that $(\F^*)^{*}=\F$ for any family $\F$: the inclusion $\F\subset (\F^*)^*$ is immediate. Conversely, if $A\in (\F^*)^*$, then $\Z_+\setminus A\not \in \F^*$ by the definition of a dual family. This means that there exists $B\in\F$ such that $B\cap (\Z_+\setminus A)=\emptyset$. That is, $B\subset A$, which gives $A\in \F$.

Thus any family is a dual family, and Lemma~\ref{part.reg} also gives that a family $\F$ is a filter if and only if $\F^* \cdot \F \subset \F^*$ and if and only if $\F^*$ is partition regular. Another consequence of Lemma~\ref{part.reg} is that any family $\F$ that is both a filter and partition regular (called an \emph{ultrafilter}) must satisfy $\F=\F^*$.

Finally, our third property: A family $\F$ on $\Z_+$ is said to be \emph{shift$_-$-invariant} provided for every $i\in \Z_+$ and  each $A\in \mathscr{F}$, we have $(A-i)\cap \Z_+\in \F$. We say that $\F$ is called  \emph{shift$_+$-invariant} if for every $i\in \Z_+$ and  each $A\in \mathscr{F}$, we have $A+i\in \F$. When $\F$ is both, shift$_-$-invariant and shift$_+$-invariant, we simply call it \emph{shift invariant}.
For instance, the families of infinite sets, cofinite sets, thick sets and syndetic sets are shift invariant.

We may gain shift invariance by reducing a family. Given a family $\F$, we define
\[
\Ft_+=\{ A\subset \Z_+ \ : \ \forall N\in\Z_+ \ \exists B\in \F  \mbox{ such that }  A\supset B + [0,N] \},
\]
\[
\Ft_-=\{ A\subset \Z_+ \ : \ \forall N\in\Z_+ \ \exists B\in \F  \mbox{ such that }  A\supset (B + [-N,0])\cap\Z_+ \},
\]
\[
\Ft=\{ A\subset \Z_+ \ : \ \forall N\in\Z_+ \ \exists B\in \F  \mbox{ such that }  A\supset (B + [-N,N])\cap\Z_+ \}.
\]
So for any family $\F$ we have the inclusions $\Ft\subset \Ft_+\subset \F$ and $\Ft\subset \Ft_-\subset \F$, and that $\Ft_-$ is shift$_+$-invariant, $\Ft_+$ is shift$_-$-invariant, and $\Ft$ is shift invariant.

\begin{lemma}\label{techft}
If $\F$ is a filter on $\Z_+$, so is $\Ft$.  Moreover, for any family $\F$ satisfying  $\Ft \cdot \Ft\subset \F$  the subfamily $\Ft$ is a filter.
%Let $\F$ be a family on $\Z_+$ such that $\Ft \cdot \Ft\subset \F$. Then $\Ft$ is a filter.
\end{lemma}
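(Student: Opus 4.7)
The plan is to show that $\Ft$ is closed under binary intersection; hereditary upward closure and the infinite-set condition are built into the definition (any element of $\Ft$ contains a set of the form $(B+[-N,N])\cap\Z_+$ with $B\in\F$ infinite). Fix $A_1,A_2\in\Ft$ and $N\in\Z_+$; the task is to produce some $B\in\F$ with $(B+[-N,N])\cap\Z_+\subset A_1\cap A_2$.

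The main construction is the ``$N$-interior'' of each $A_i$, namely
\[
D_i := \{ n\in\Z_+ : [n-N,n+N]\cap\Z_+ \subset A_i\} \qquad (i=1,2).
\]
Tautologically $(D_1\cap D_2 + [-N,N])\cap\Z_+ \subset A_1\cap A_2$. Applying the definition $A_i\in\Ft$ with parameter $N$ yields $B_i\in\F$ with $A_i\supset (B_i+[-N,N])\cap\Z_+$, which immediately forces $B_i\subset D_i$, so $D_i\in\F$ by upward closure. For the first statement, if $\F$ is a filter then $D_1\cap D_2\in\F$, which together with the tautological inclusion above gives $A_1\cap A_2\in\Ft$.

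For the second statement the plan is to upgrade this to $D_i\in\Ft$. Given $N'\in\Z_+$, I would apply $A_i\in\Ft$ with the enlarged parameter $N+N'$ to get $B_i'\in\F$ with $A_i\supset (B_i'+[-(N+N'),N+N'])\cap\Z_+$; a direct triangle-inequality check on the windows shows $D_i\supset (B_i'+[-N',N'])\cap\Z_+$, so indeed $D_i\in\Ft$. The hypothesis $\Ft\cdot\Ft\subset\F$ then delivers $D_1\cap D_2\in\F$, and we conclude exactly as in the first case.

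The main obstacle is spotting the $N$-interior step. The naive choice $B=B_1\cap B_2$ only works when $\F$ is itself closed under intersection, which is precisely what we are not assuming in the second statement. Replacing $B_i$ by the thicker set $D_i$ does two things at once: it absorbs the subsequent shift by $[-N,N]$ back into $A_1\cap A_2$, and, crucially, $D_i$ itself satisfies the defining property of $\Ft$, which is exactly what unlocks the weaker hypothesis $\Ft\cdot\Ft\subset\F$.
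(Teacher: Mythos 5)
Your proof is correct, and at the strategic level it matches the paper's: in both arguments the point is to replace the witnesses $B_i\in\F$ by subsets of $A_i$ that themselves lie in $\Ft$, so that the hypothesis $\Ft\cdot\Ft\subset\F$ (rather than closure of $\F$ under intersection) can be invoked, and then to check that the intersection of these subsets, thickened by $[-N,N]$, still sits inside $A_1\cap A_2$. Where you differ is in the construction of those subsets. The paper chooses witnesses $B_i(J)$ at the doubled scale $[-2J,2J]$ and forms the union $\bar{A}_i(N)=\bigcup_{J\geq N}(B_i(J)+[-J,J])\cap\Z_+$, which then requires the bookkeeping $m=k_i(J_i)+l_i(J_i)+l(N)\in B_i(J_i)+[-2J_i,2J_i]$ to land back in $A_i$. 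You instead take the $N$-interior $D_i=\{n\in\Z_+:[n-N,n+N]\cap\Z_+\subset A_i\}$, which is the maximal set whose $[-N,N]$-thickening stays in $A_i$; with this canonical choice the inclusion $(D_1\cap D_2+[-N,N])\cap\Z_+\subset A_1\cap A_2$ is tautological, $D_i\supset B_i$ gives $D_i\in\F$ for the filter case at once, and $D_i\in\Ft$ follows from a one-line window estimate using the parameter $N+N'$. The two routes prove exactly the same statement; yours trades the paper's explicit union-over-scales for the cleaner observation that the $N$-interior of an element of $\Ft$ is again in $\Ft$, which makes the verification shorter and, I would say, more transparent.
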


\begin{proof}
Let $A_1,A_2\in \Ft$. We have to show that $A_1\cap A_2\in \Ft$. Given $N\in \N$, there are $B_1(N),B_2(N)\in \F$ such that $(B_1(N)+[-2N,2N])\cap\Z_+\subset A_1$ and
$(B_2(N)+[-2N,2N])\cap\Z_+\subset A_2$. For $i=1,2$ we define
\[
\bar{A}_i(N):=\bigcup_{J\geq N} (B_i(J)+[-J,J])\cap \Z_+.% \ \mbox{ and } \ \bar{A}_2(N)\bigcup_{J\geq N} (B_2(J)+[-J,J])\cap \Z_+ .
\]
Clearly $\bar{A}_1(N),\bar{A}_2(N)\in\Ft$ for each $N\in\N$. By hypothesis, $B(N):=\bar{A}_1(N) \cap \bar{A}_2(N)\in\F$, $N\in\N$. To prove that $A_1\cap A_2\in\Ft$ we just need to
show that $(B(N)+[-N,N])\cap\Z_+\subset A_1\cap A_2$ for every $N\in\N$. Indeed, given $N\in\N$ and $m\in (B(N)+[-N,N])\cap\Z_+$, we write $m=k(N)+l(N)$ with $k(N)\in B(N)$ and $l(N)\in [-N,N]$.
By definition of $B(N)$ we have
\[
k(N)=k_1(J_1)+l_1(J_1)=k_2(J_2)+l_2(J_2)
\]
\[
\mbox{for some } k_i(J_i)\in B_i(J_i), \ \ l_i(J_i)\in [-J_i,J_i], \  J_i\geq N, \ i=1,2.
\]
Thus
\[
m=k_1(J_1)+l_1(J_1)+l(N)\in (B_1(J_1)+[-2J_1,2J_1])\cap\Z_+\subset A_1,
\]
and, analogously, $m\in A_2$, which yields the result.
\end{proof}

%As a consequence, if $\F$ is a filter then $\Ft$ is a filter too.

The rest of the section is dedicated to $\F$-maps and $\F$-operators.
Every $\Ft$-map is an $\F$-map, since $\Ft \subset \F$. The next lemma gives conditions for the converse, and is used in Proposition~\ref{bilateral.weighted.shifts.F.oper}.

\begin{lemma}\label{lthick}
Let $\F$ be a family on $\Z_+$ and let $T$ be a $\F$-map. The following are equivalent.
\begin{enumerate}
\item[(i)] $T$ is weakly mixing,
\item[(ii)] $T$ is an $\Ft$-map.
\end{enumerate}
\end{lemma}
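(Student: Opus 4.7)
The direction (ii) $\Rightarrow$ (i) is immediate. Since $\Ft \subset \F$, any $\Ft$-map is an $\F$-map. For weak mixing, fix $U, V \in \mathcal U(X)$ and $N \in \Z_+$: the hypothesis yields an infinite $B \in \F$ with $(B+[-N,N])\cap\Z_+ \subset N(U,V)$, and any $b \in B$ with $b \geq N$ gives $[b-N,b+N] \subset N(U,V)$. Hence $N(U,V)$ is thick for all $U,V$, so $T$ is weakly mixing.

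For (i) $\Rightarrow$ (ii), fix $U, V \in \mathcal U(X)$ and $N \in \Z_+$. The plan is to use weak mixing to pre-arrange a length-$(2N+1)$ block of consecutive returns into $V$, and to feed this configuration into the $\F$-transitivity of $T$. More precisely, I aim to produce an open non-empty $\tilde U \subset U$ and an integer $n_0 \geq 0$ such that $T^{n_0+j}(\tilde U) \subset V$ for every $j \in \{0,\ldots,2N\}$. Granted such a pair, any $n \in N(U,\tilde U)$ has a witness $w \in U$ with $T^n(w) \in \tilde U$, whence $T^{n+n_0+j}(w) \in V$ and $n+n_0+j \in N(U,V)$ for every $j$. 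The shifted set $B := N(U, \tilde U) + (n_0+N)$ is contained in $[N, \infty)$ and satisfies $(B+[-N,N])\cap\Z_+ \subset N(U,V)$, as required.

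To construct $(\tilde U, n_0)$ I would use the equivalence between weak mixing of $T$ and topological transitivity of every finite power $T^{(k)} = T \times \cdots \times T$; in particular, $T^{(2N+1)}$ is transitive on $X^{2N+1}$. Since topological transitivity of $T$ forces dense range, each preimage $T^{-j}(V)$ is a non-empty open subset of $X$. Applying transitivity of $T^{(2N+1)}$ to the boxes $U^{2N+1}$ and $V \times T^{-1}(V) \times \cdots \times T^{-2N}(V)$ produces an integer $n_0$ and points $u_j \in U$ with $T^{n_0+j}(u_j) \in V$, and continuity of $T^{n_0+j}$ then furnishes open neighborhoods $\tilde U_j \subset U \cap T^{-(n_0+j)}(V)$ of each $u_j$.

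The main obstacle is that the points $u_j$ may be distinct, so the candidate $\tilde U := \bigcap_{j=0}^{2N} \tilde U_j$ need not be non-empty. This is where the proof must invoke the closure properties of $\F$ in force for the applications of this lemma: one instead keeps each $N(U, \tilde U_j) \in \F$ (available by $\F$-transitivity) and assembles the intersection $B_0 := \bigcap_{j=0}^{2N} N(U, \tilde U_j)$, after which $B := B_0 + n_0 + N$ satisfies $(B+[-N,N])\cap\Z_+ \subset N(U,V)$ by exactly the diagonal computation above. The remaining work — showing $B_0 \in \F$ and that $\F$ admits the shift by $n_0+N$ — is precisely a filter and shift-invariance-type closure property; equivalently, it can be packaged as an $\F$-transitivity statement for the product system $(X^{2N+1}, T^{(2N+1)})$ applied to the boxes $U^{2N+1}$ and $\prod_{j=0}^{2N} \tilde U_j$.
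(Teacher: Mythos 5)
Your (ii)$\Rightarrow$(i) direction is correct and is essentially the paper's argument: every member of $\Ft$ is thick, so all return sets are thick and $T$ is weakly mixing.

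The (i)$\Rightarrow$(ii) direction, however, has a genuine gap — one you partly flag yourself but do not close. Your plan ultimately requires two closure properties of $\F$ that are not hypotheses of the lemma: that $B_0=\bigcap_{j=0}^{2N}N(U,\tilde U_j)$ belongs to $\F$ (a filter property) and that the translate $B_0+(n_0+N)$ belongs to $\F$ (shift-invariance). Here $\F$ is an arbitrary family, so neither holds in general; indeed the lemma is applied in the paper to $\F=\mathcal{S}$ and other non-filters. Your proposed repair — ``package it as $\F$-transitivity of the product system $T^{\times(2N+1)}$'' — is not available either: the hypothesis is only that $T$ is an $\F$-map on $X$, and for a general family this does not transfer to finite products (that transfer is Proposition~\ref{powandsum}(2) and needs $\F$ to be a filter). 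The paper sidesteps both obstacles with one move: by Furstenberg's theorem, weak mixing of $T$ implies that $\mathcal{N}_T$ itself is a filter, so there is a \emph{single} pair $U',V'\in\mathcal{U}(X)$ with
\[
N(U',V')\subset \bigcap_{m=0}^{N}\bigl(N(T^{-m}U,V)\cap N(U,T^{-m}V)\bigr).
\]
One application of $\F$-transitivity gives $N(U',V')\in\F$, and the symmetric choice of targets (preimages of $U$ for the backward shifts, preimages of $V$ for the forward shifts) yields $(N(U',V')+[-N,N])\cap\Z_+\subset N(U,V)$ directly, with the witnessing set already in $\F$ and no translation needed. In short: the filter structure you are missing must come from the dynamics (via $\mathcal{N}_T$), not from $\F$, and the interval must be centered by choosing two-sided preimages rather than by shifting the set afterwards.
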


\begin{proof}
(i) implies (ii): Given $N\in \N$ and $U, V\in \mathcal{U}(X)$, since $T$ is weakly mixing, by Furstenberg result we know that $\mathcal{N}_T$ is a filter, so there are $U', V'\in \mathcal{U}(X)$ such that
\[
N(U',V')\subset N(T^{-m}(U),V)\cap N(U,T^{-m}(V)),
\]
for $m=0,\dots ,N$. By $\F$-transitivity we have $ N(U',V')\in \F$. We then conclude that
$(N(U',V')+[-N,N])\cap \Z_+\subset N(U,V)$, and $T$ is $\Ft$-transitive.

(ii) implies (i): If $T$ is an $\Ft$-map, since every element of $\Ft$ is thick, we have that $\mathcal{N}_T$ consists of thick sets and,
as we already recalled in the introduction, this means that $T$ is weakly mixing.
\end{proof}

To state the $\F$-Transitivity Criterion, we recall the notion of \emph{limit along a family $\F$}: Given a sequence $\{x_n\}_n$ in $X$ and
 $x\in X$, we say that
\[
\mbox{
 $\mathscr{F}-\lim_n x_n=x$, or that $x_n \overset{\F}{\to} x$,}
\]
 provided
$
\{ n\in \Z_+ : x_n\in U\}  \in \F$ for each neighbourhood $U$ of $x$.

 \begin{theorem} {\rm ($\F$-Transitivity Criterion)} %[\emph{$\F$-Transitivity Criterion}] \ \
 \label{prop.Fcrit} Let $T$ be an operator on a topological vector space $X$ and let  $\mathscr{F}$ be a family on $\Z_+$ such that $\Ft$ is a filter. Suppose there exist $D_1, D_2$ dense sets in $X$, and (possibly discontinuous) mappings $S_n: D_2\rightarrow X$, $n\in\N$ satisfying
\begin{itemize}
\item[(a)]  $\F$-$\lim_nT^n(x)=0$ for every $x\in D_1$
\item[(b)]  $\F$-$\lim_n (S_n(y),T^nS_n(y))=(0,y)$ for every $y\in D_2$.
\end{itemize}
 Then $T$ is an $\Ft$-operator.
 \end{theorem}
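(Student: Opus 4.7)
The plan is to show that $T$ is $\F$-transitive and weakly mixing, and then invoke Lemma~\ref{lthick} to conclude that $T$ is $\Ft$-transitive. Both intermediate facts are established by a Hypercyclicity Criterion style argument based on (a), (b), with the filter property of $\Ft$ (via Lemma~\ref{techft}, which yields $\Ft\cdot\Ft\subset\F$) used to secure the necessary intersections within $\F$.

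For $\F$-transitivity, given $U,V\in\mathcal{U}(X)$ I would pick $x\in D_1\cap U$ and $y\in D_2\cap V$ together with a balanced $0$-neighborhood $W$ such that $x+W\subset U$ and $y+W+W\subset V$. By (a), $A_1:=\{n:T^nx\in W\}\in\F$, and by the joint form of (b), $A_2:=\{n:S_n(y)\in W \text{ and } T^nS_n(y)-y\in W\}\in\F$. For $n\in A_1\cap A_2$ the point $z_n:=x+S_n(y)$ lies in $U$ and $T^nz_n\in V$, so $n\in N(U,V)$; thus $N(U,V)\supset A_1\cap A_2$. The key step is to show $A_1\cap A_2\in\F$. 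Shrinking $W$ using continuity of the iterates $T^j$ allows one to arrange that $A_1,A_2\in\Ft$, and then $\Ft\cdot\Ft\subset\F$ gives $A_1\cap A_2\in\Ft\subset\F$.

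For weak mixing I would apply the same construction to $T\times T$ on $X\times X$, using density of $D_1\times D_1$ and $D_2\times D_2$ in $X\times X$ together with the product maps $S_n\times S_n$. The same filter-based argument then produces $n\in N(U_1,V_1)\cap N(U_2,V_2)$ for any $U_i,V_i\in\mathcal{U}(X)$, i.e., topological transitivity of $T\times T$, which is weak mixing of $T$.

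With both $\F$-transitivity and weak mixing in hand, Lemma~\ref{lthick} gives $\Ft$-transitivity of $T$, as required. The main obstacle is the promotion from the $\F$-convergences in (a), (b) to $\Ft$-membership of the base sets $A_1,A_2$; this is handled by continuity of $T$ (which propagates the condition $T^nx\in W$ to nearby iterates) in combination with the filter property of $\Ft$, the latter being indispensable to legitimize the finite intersections of $\F$-sets that arise in both sub-steps.
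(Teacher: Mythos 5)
Your overall architecture (prove $\F$-transitivity and weak mixing separately, then invoke Lemma~\ref{lthick}) is legitimate, and the linear ``four-set'' computation with $z_n=x+S_n(y)$ is the standard one; but the step you yourself flag as the key step contains a genuine gap, and it is precisely where all the work of the theorem lies. For a \emph{fixed} $x\in D_1$, continuity of the iterates $T^j$ only propagates the condition $T^nx\in W$ \emph{forward}: choosing $W''$ with $T^m(W'')\subset W$ for $m\le N$ gives $\{n:T^nx\in W''\}+[0,N]\subset A_1$, i.e. $A_1\in\Ft_+$, but nothing lets you conclude $T^{n-m}x\in W$ from $T^nx\in W''$, so $A_1\in\Ft$ fails in general (take $\F$ to be the filter generated by the tails of a sparse sequence $(n_k)$ along which $T^{n_k}x\to 0$; nothing forces $T^{n_k-1}x$ to be small). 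The situation for $A_2$ is worse: the maps $S_n$ are unrelated for different $n$, so no upgrade is available for the condition $S_n(y)\in W$ itself, and what continuity does give (control of $T^mS_n(y)$ for $m\ge 0$) produces intervals of the form $[n-N,n]$, an $\Ft_-$-type structure. An $\Ft_+$-set intersected with an $\Ft_-$-set (or with a mere $\F$-set) need not lie in $\F$, so $A_1\cap A_2\in\F$ is not established, and with it both your $\F$-transitivity step and your weak-mixing step collapse.

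The paper's proof resolves exactly this mismatch by letting the witness points depend on the window size: for each $N$ it picks $x\in D_1\cap T^{-N}U'$ and $y\in D_2\cap T^{-N}V'$ and shows that the \emph{open-set} return sets $N(U',W)$ and $N(W,V')$ (not the orbit return sets of a single fixed point) belong to $\Ft$ --- the pre-composition with $T^{-N}$ supplies the missing backward shift, converting the one-sided intervals $[0,2N]$ and $[-2N,0]$ into the symmetric window $[-N,N]$ demanded by $\Ft$. Once $N(U',W),N(W,V')\in\Ft$, the filter hypothesis on $\Ft$ gives $N(U,V)\supset N(U',W)\cap N(W,V')\in\Ft$ directly, with no need for the detour through weak mixing. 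Note also that if your key claim were repaired, the detour would be superfluous in your argument too: $A_1\cap A_2\in\Ft$ would already yield $N(U,V)\in\Ft$, since $\Ft$ is hereditarily upward.
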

 \begin{proof}

 Let $U, V\in \mathcal{U}(X)$. We fix $U', V'\in \mathcal{U}(X)$ and a $0$-neighbourhood $W$ such that $U'+W\subset U$ and $V'+W\subset V$. Given $N\in\N$, pick $x\in D_1\cap T^{-N}U'$ and $y\in D_2\cap T^{-N}V'$. By continuity of $T$ we easily get
 \[
 \Ft_+-\lim_nT^n x=0 ,
 \]
 which yields $N(T^{-N}U',W)\in \Ft_+$. That is, there is $A\in \F$ such that $A+[0,2N]\subset N(T^{-N}U',W)$. Therefore,
 \[
 (A+[-N,N])\cap \Z_+\subset (N(T^{-N}U',W)-N)\cap\Z_+\subset N(U',W),
 \]
 and, since $N$ was arbitrary, we have that $N(U',W)\in \Ft$.

 Also,  we find a $0$-neighbourhood $W'\subset W$ with $T^m(W')\subset W$ and $y+W'\subset T^{-N}V'$, $m=0,\dots ,2N$. There is $A\in \F$ such that $S_ny\in W'$ and $T^nS_n(y)\in y+W'$ for all $n\in A$. Thus,
\[
\left(T^{(n-m)}(T^mS_n(y)),T^mS_n(y)\right)\in (y+W',T^m(W'))\subset (T^{-N}V',W) ,
\]
for $m=0,\dots ,2N$ and for every $n\in A$. In particular, $(A+[-N,N])\cap\Z_+ \subset N(W,V')$. Since $N$ was arbitrary, we obtain that $N(W,V')\in\Ft$. Therefore,
\[
N(U,V)\supset N(U'+W,V'+W)\supset N(U',W)\cap N(W,V')\in \Ft \cdot \Ft  \subset \Ft,
\]
that is, $T$ is an $\Ft$-operator.
 \end{proof}

\begin{remark}\label{remfcrit}
\begin{enumerate}
\item \
By Lemma~\ref{techft} the assumption that $\Ft$ be a filter is trivially satisfied in the case that $\F$ is a filter, but Theorem~\ref{prop.Fcrit} applies beyond this case. For instance, the family $\F=\mathcal{S}$  of syndetic sets is not a filter, and $\widetilde{\mathcal{S}}= \mathcal{TS}$ is the family of \emph{thickly syndetic sets}, which is a filter. So every operator that satisfies the $\mathcal{S}$-Transitivity Criterion is a $\mathcal{TS}$-operator.

In contrast, if we consider the family of \emph{piecewise syndetic sets} $\mathcal{PS}=\mathcal{TS}^*=\mathcal{T}\cdot \mathcal{S}$ (i.e., $A$ is piecewise syndetic if, and only if, it is the intersection of a thick set with a syndetic set), then $\widetilde{\mathcal{PS}}=\mathcal{T}$, and $\emptyset\in \mathcal{T}\cdot \mathcal{T}$. Thus the hypotheses of Theorem~\ref{prop.Fcrit} are not satisfied. Actually, it is not hard to construct an operator $T$ such that conditions (a) and (b) in Theorem~\ref{prop.Fcrit} are satisfied for $\F=\mathcal{PS}$, with $T$ not even transitive.

\item \ Another remarkable case is provided by, given a strictly increasing sequence $(n_k)_k$ in $\N$, considering the filter
\[
\F:=\{ A\subset \N : \exists j\in\N \mbox{ with } A\supset \{ n_k: k\geq j\}\}.
\]
In this case Theorem~\ref{prop.Fcrit} turns out to coincide with the classical Hypercyclicity Criterion. Moreover, since the Hypercyclicity Criterion characterizes the weakly mixing operators on separable $F$-spaces \cite{BePe}, we have that every weakly mixing operator $T$ on a separable $F$-space $X$ supports a strictly increasing sequence $(n_k)_k$ in $\N$ such that $T$ is an $\F$-operator, where
\[
\F:=\{ A\subset \N : \forall N\in\N \ \exists j\in\N \mbox{ with } A\supset \{ n_k: k\geq j\}+[-N,N]\}.
\]
\item \ We note that for an $\Ft$-operator $T$ with $\Ft$ a filter it is not true in general that $T$ must satisfy the $\mathcal{G}$-Transitivity Criterion for some filter $\mathcal{G}\subset \Ft$: just consider the family $\F=\I^*$ of cofinite sets and the fact that there exist mixing operators not satisfying Kitai's Criterion \cite[Theorem 2.5]{Gri}.

\item \ Recall that for the case  $\F=\I$, Furstenberg \cite[Proposition II.3]{Furstenberg67} showed that once $T\oplus T$ is an $\I$-map on $X^2$, every direct sum $\oplus_{j=1}^r T$  on $X^r$ is an $\I$-map too ($r\in \N$).
 The assumptions of the $\F$-Transitivity Criterion on an operator $T$ clearly ensure that (any direct sum $\oplus_{j=1}^r T$ will satisfy the $\F$-Transitivity Criterion on the space $X^r$ and thus that) $\oplus_{j=1}^r T$ is an $\Ft$-operator on $X^r$, for every $r\in\N$.
\end{enumerate}
\end{remark}

We next introduce the concept of a hereditarily $\F$-operator, and we establish links with that of an $\F$-operator.

 %DEFINITION
\begin{definition}
  We say that a continuous map $T$  is a \emph{hereditarily $\mathscr{F}$-map} if  $N(U, V)\cap A\in\mathscr{F}$ for every $U, V\in \mathcal{U}(X)$ and every $A\in \mathscr{F}$ (that is,
  $\mathcal{N}_T\cdot \F\subset \F$). In addition, if $X$ is a topological vector space and $T\in \Lin (X)$, we say that $T$ is a \emph{hereditarily $\mathscr{F}$-operator}.
 \end{definition}
Clearly, hereditarily $\F$-maps are $\F$-maps. Moreover, they are automatically $\F^*$-maps since $\mathcal{N}_T\cdot \F\subset \F \not \ni \emptyset$. Also, for a filter $\F$ the concepts of $\F$-map and hereditarily $\F$-map are equivalent. More generally, we have:

   % PROPOSITION
   \begin{proposition}
   \label{Fstar.maps}
Let $T$ be a continuous map on a complete separable metric space $X$ without isolated points.
\begin{enumerate}
\item[(A)] Let $\F$ be a  partition regular family. Then the following are equivalent:
\begin{enumerate}
\item[(1)] $T$ is an $\F^*$-map;

\item[(2)] $T$ is a hereditarily $\F^*$-map;

\item[(3)] $T$ is a hereditarily $\F$-map;

\item[(4)] $hcA:=\{ x\in X : \overline{\{T^nx:n\in A\}}=X\}$ is a dense ($G_\delta$) set in $X$ for any $A\in \F$.
\end{enumerate}
\item[(B)] Let $\F$ be a  filter. Then the following are equivalent:
\begin{enumerate}
\item[(i)] $T$ is an $\F$-map;

\item[(ii)] $T$ is a hereditarily $\F$-map;

\item[(iii)] $T$ is a hereditarily $\F^*$-map;

\item[(iv)] $hcA:=\{ x\in X : \overline{\{T^nx:n\in A\}}=X\}$ is a dense ($G_\delta$) set in $X$ for any $A\in \F^*$.
\end{enumerate}
\end{enumerate}
\end{proposition}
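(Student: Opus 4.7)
My plan is to reduce (A) to (B) by duality and then prove (B) via the cycle (i)$\Rightarrow$(ii)$\Rightarrow$(iii)$\Rightarrow$(iv)$\Rightarrow$(i). When $\F$ is partition regular, Lemma~\ref{part.reg} gives that $\F':=\F^*$ is a filter, and since $\F'^*=\F^{**}=\F$, applying (B) to $\F'$ reproduces (A) line by line. So the substantive work takes place entirely in (B), where $\F$ is assumed to be a filter.

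For (i)$\Rightarrow$(ii), given $A\in\F$ and an $\F$-transitive $T$, one has $N(U,V)\cap A\in\F\cdot\F\subset\F$ because $\F$ is a filter. For (ii)$\Rightarrow$(iii), given $A\in\F^*$, to show $N(U,V)\cap A\in\F^*$ I need only check that it meets every $B\in\F$, which follows from $N(U,V)\cap B\in\F$ (by (ii)) together with the definition of $\F^*$. For (iv)$\Rightarrow$(i), given $U,V\in\mathcal{U}(X)$ and $A\in\F^*$, density of $hc(A)$ lets me choose $x\in U\cap hc(A)$ and then some $n\in A$ with $T^n x\in V$, so $n\in N(U,V)\cap A$; since $A\in\F^*$ was arbitrary, this yields $N(U,V)\in\F^{**}=\F$.

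The substantive step is (iii)$\Rightarrow$(iv), a standard Birkhoff-type Baire category argument carried out along the subsequence $A$. Fixing a countable base $(V_k)_k$ of $\mathcal{U}(X)$, I would write
\[
hc(A)=\bigcap_{k}\bigcup_{n\in A}T^{-n}(V_k),
\]
which is manifestly $G_\delta$; density of the $k$-th union is equivalent to $N(U,V_k)\cap A\neq\emptyset$ for every $U\in\mathcal{U}(X)$, and this follows from (iii), since $N(U,V_k)\cap A\in\F^*$ is in particular nonempty. The Baire category theorem, applicable because $X$ is completely metrizable, then yields that $hc(A)$ is dense. I expect no serious obstacle; the only delicate point is the dual-family bookkeeping in (ii)$\Leftrightarrow$(iii), which becomes formal once one exploits $\F=\F^{**}$. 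The hypotheses on $X$ play their usual roles: separability provides the countable base, completeness gives Baire, and the absence of isolated points ensures each $V_k$ is suitable.
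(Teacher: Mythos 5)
Your proposal is correct and follows essentially the same route as the paper: a duality reduction between (A) and (B) via Lemma~\ref{part.reg} and $\F=\F^{**}$, the filter/partition-regularity manipulations for the hereditary equivalences, and a Birkhoff-type Baire category argument for the $hcA$ characterization (which the paper only sketches by reference to \cite{GrPe}). The only cosmetic difference is that the paper proves (A) directly and deduces (B) by duality, whereas you do the reverse and organize the implications as a cycle.
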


\begin{proof} We will just show (A) since (B) follows by taking duals and Lemma~\ref{part.reg}. Indeed, condition (1) is equivalent to (2) because $\F^*$ is a filter. The fact that (1) implies (3) is a consequence of Lemma~\ref{part.reg} too, while the converse was already noticed before for general families. Finally  the equivalence between (1) and (4) can be shown in a similar way as Birkhoff's transitivity theorem \cite{GrPe}.
\end{proof}

  Note that when considering   the family $\F=\I$ of infinite sets in Proposition~\ref{Fstar.maps} (A) we obtain the known equivalences for mixing maps.

\begin{remark}
By the same argument for an operator $T$ on a separable topological vector space $X$, the first three equivalences of statements $(A)$ and $(B)$ still hold.  We also point out that as with the hypercyclic case we have the following comparison principle for $\F$-maps and transference principle for  $\F$-operators, see \cite[Chapter 12]{GrPe}.
\begin{enumerate}
\item[1.]\  ($\F$-Comparison Principle) Any quasifactor of an $\F$-map is an $\F$-map. Indeed, let $T:X\to X$ be an $\F$-map and let
$S:Y\to Y$ and $\phi:X\to Y$ be maps so that $\phi\circ T = S\circ \phi$, where $\phi$ has dense range. Then for any non-empty open subsets $U$ and $V$ of $Y$ we have
$
N_S(U, V)= N_T(\phi^{-1}(U), \phi^{-1}(V) )\in \F
$.

\item[2.]\ (Transference Principle) Let $\F$ be a family and let $T$ be an operator on a topological vector space $X$ so that each operator $S$ on an $F$-space  that is quasi-conjugate to $T$ via an operator (that is,  it supports a dense range operator $J:X\to Y$ with $J T= S J$)  is an $\F$-map.  Then $T$ is an $\F$-map.

\end{enumerate}

\end{remark}

% SUBSECTION
 \section{ $\F$-transitive weighted shift operators}\label{fshifts}

%In order to be able to distinguish the different classes of $\F$-operators, it would be nice to have at hand a characterization of $\F$-weighted backward shifts, via conditions relying completely on the weight.

Each bounded bilateral weight sequence $w=(w_k)_{k\in \Z}$, induces a \emph{bilateral weighted backward shift} operator $B_w$ on $X=c_0(\Z)$ or $\ell^p(\Z)$ ($1\leq p<\infty$) given by $B_{w}e_k:=w_{k}e_{k-1}$, where $(e_k)_{k\in \Z}$ denotes the canonical basis of $X$.

Similarly,  each bounded sequence $w=(w_n)_{n\in \N}$ induces a \emph{unilateral weighted backward shift} operator $B_w$ on $X=c_0(\Z_+)$ or $\ell^p(\Z_+)$ $(1\leq p<\infty)$, given by $B_{w}e_n:=w_{n}e_{n-1}, n\geq 1$ and $B_{w}e_0:=0$, where $(e_n)_{n\in \Z_+}$ denotes the canonical basis of $X$.

Our characterization of $\F$-transitive weighted backward shifts will rely on the properties of the sets $A_{M,j}$ and $\bar{A}_{M,j}$ defined as
  \[
\begin{aligned}
  A_{M,j}&:= \Big\{n\in \N : \prod_{i=j+1}^{j+n} \abs{w_i}>M\Big\}\\
%  \]
%  \[
   \bar{A}_{M,j}&:=\Big\{n\in \N : \frac{1}{\prod_{i=j-n+1}^j\abs{w_i}}>M\Big\},
\end{aligned}
  \]
  where $M>0$ and $j\in \Z$. In the case $j=0$, we just write $A_M, \bar{A}_M$ instead of $A_{M, 0}, \bar{A}_{M, 0}$ respectively. We note that Salas' \cite{Sal}  characterization of hypercyclic (i.e., transitive) bilateral weighted shifts on the above sequence spaces may be formulated as
  \[
  B_w \mbox{ is hypercyclic } \Leftrightarrow \ \forall M>0 \ \ \forall N\in\N \ \ \bigcap_{j=-N}^N (A_{M,j}\cap \bar{A}_{M,j})\neq \emptyset.
  \]
  In other words, since $A_{M',j}\subset A_{M,j}$ and $\bar{A}_{M',j}\subset \bar{A}_{M,j}$ whenever $M'>M>0$, the collection of subsets $\{A_{M,j},\bar{A}_{M,j}\}_{M>0,j\in\Z}$ should form a filter subbase for the hypercyclicity of $B_w$. In that case, we denote by $\mathcal{A}_w$ the generated filter. Therefore, for the characterization of weighted shifts $B_w$ that are $\F$-operators for a certain family $\F$ we need to assume that $\mathcal{A}_w$ is a filter.

  When $B_w$ is hypercyclic (i.e., when $\mathcal{A}_w$ is a filter), we can describe a filter base of $\mathcal{A}_w$, which will be very useful in the characterization of weighted shifts that are $\F$-operators, and it is given by the collection of sets
  \[
  \{ A_{M,j}\cap \bar{A}_{M,j} \ : \ M>0 \mbox{ and } j\in \N\}.
  \]
  Actually, this is a consequence of the observation that, if  $M_1,M_2>0$ and  $j_1,j_2,j_3\in\Z$ with $j_3>\max\{\abs{j_1},\abs{j_2}\}$, then there is $M_3>0$ such that
  \[
  A_{M_3,j_3}\subset A_{M_1,j_1} \cap A_{M_2,j_2} \ \ \mbox{ and } \ \ \bar{A}_{M_3,j_3}\subset \bar{A}_{M_1,j_1} \cap \bar{A}_{M_2,j_2}.
  \]
  Indeed, let $M:=\sup_{i\in\Z}\abs{w_i}$. We fix $M_3>K(M_1+M_2)(1+M)^{2j_3}$, where
  \[
  K:=1+\max_{-j_3\leq m_1\leq m_2\leq j_3} \prod_{i=m_1}^{m_2}\abs{w_i}^{-1}.
  \]
  If $n\in A_{M_3,j_3}$ then
  \[
  \prod_{i=j_1+1}^{j_1+n}\abs{w_i} =\left(\prod_{i=j_3+1}^{j_3+n}\abs{w_i}\right) \frac{\prod_{i=j_1+1}^{j_3}\abs{w_i}}{\prod_{i=j_1+n+1}^{j_3+n}\abs{w_i}}>M_3 \frac{\prod_{i=j_1+1}^{j_3}\abs{w_i}}{M^{j_3-j_1-1}}
  >M_1.
  \]
  That is, $n\in A_{M_1,j_1}$. The same argument shows $n\in A_{M_2,j_2}$. Analogously, we also have $\bar{A}_{M_3,j_3}\subset \bar{A}_{M_1,j_1} \cap \bar{A}_{M_2,j_2}$.

  % PROPOSITION
  \begin{proposition}
  \label{bilateral.weighted.shifts.F.oper}
  Let $B_w$ be a  bilateral weighted backward shift on $X=c_0(\Z)$ or $\ell^p(\Z)$, $1\leq p< \infty$.  Then the following are equivalent:
\begin{enumerate}
\item[(1)] $B_w$ is an $\Ft$-operator;
\item[(2)] $B_w$ is an $\F$-operator;
\item[(3)] for every $j\in \N$ and $M>0$, $A_{M, j}\cap \bar{A}_{M, j}\in  \F$;
\item[(4)] $B_w$ is hypercyclic, $\mathcal{A}_w\subset \F$, and $B_w$ satisfies the $\mathcal{A}_w$-Criterion.
% $B_w$ satisfies the $\mathcal{A}_w$-Criterion and $\mathcal{A}_w\subset \F$.
\end{enumerate}
In addition, if $\Ft$ is a filter, then the above conditions are equivalent to
\begin{enumerate}
\item[(5)] for every $j\in \N$ and $M>0$ we have $A_{M, j}\in \F$ and $\bar{A}_{M, j}\in \F$.
\end{enumerate}
\end{proposition}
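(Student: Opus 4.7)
The plan is to prove a cycle $(1)\Rightarrow(2)\Rightarrow(3)\Rightarrow(4)\Rightarrow(1)$ and then handle $(3)\Leftrightarrow(5)$ separately under the additional hypothesis that $\Ft$ is a filter. The implication $(1)\Rightarrow(2)$ is trivial from $\Ft\subset\F$. For $(2)\Rightarrow(3)$ I would choose $U=V=B(e_j,\delta)$; given $n\in N(U,V)$ realized by $x\in U$ with $B_w^n x\in V$, the identities $(B_w^n x)_j = x_{j+n}\prod_{i=j+1}^{j+n}w_i$ and $(B_w^n x)_{j-n}=x_j\prod_{i=j-n+1}^{j}w_i$, combined with $|x_j|>1-\delta$, $|x_{j+n}|<\delta$, $|(B_w^n x)_j|>1-\delta$ and $|(B_w^n x)_{j-n}|<\delta$, force $\prod_{i=j+1}^{j+n}|w_i|>(1-\delta)/\delta$ and $\prod_{i=j-n+1}^{j}|w_i|<\delta/(1-\delta)$, i.e.\ $n\in A_{M,j}\cap\bar{A}_{M,j}$ with $M=(1-\delta)/\delta$; letting $\delta\to 0$ and using that $\F$ is hereditarily upward yields (3).

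For $(3)\Rightarrow(4)$, the set $A_{M,j}\cap\bar{A}_{M,j}\in\F$ is in particular non-empty, and combined with the filter-base inclusion $A_{M^*,j^*}\cap\bar{A}_{M^*,j^*}\subset\bigcap_{|j|\le N}(A_{M,j}\cap\bar{A}_{M,j})$ derived just before the proposition from boundedness of the weights, this yields Salas' criterion and hence hypercyclicity of $B_w$ together with the filter property of $\mathcal{A}_w$; moreover $\mathcal{A}_w\subset\F$ by upward closure. To verify the $\mathcal{A}_w$-Criterion I take $D_1=D_2$ to be the space of finitely supported vectors and $S_n=S_w^n$: on each basis vector $e_j$ the relevant sublevel sets are exactly $\bar{A}_{1/\epsilon,j}$ and $A_{1/\epsilon,j}$ (which belong to $\mathcal{A}_w$); for a linear combination I appeal to the filter property of $\mathcal{A}_w$; and $T^nS_ny=y$ holds exactly. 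Then $(4)\Rightarrow(1)$ is an application of Theorem~\ref{prop.Fcrit} to the filter $\mathcal{A}_w$: by Lemma~\ref{techft} its $\widetilde{\mathcal{A}_w}$ is again a filter, so Theorem~\ref{prop.Fcrit} produces a $\widetilde{\mathcal{A}_w}$-operator, and from $\mathcal{A}_w\subset\F$ the very definition of the tilde yields $\widetilde{\mathcal{A}_w}\subset\Ft$, i.e.\ $B_w$ is an $\Ft$-operator.

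Under the extra assumption that $\Ft$ is a filter, $(3)\Rightarrow(5)$ is immediate from $A_{M,j}\supset A_{M,j}\cap\bar{A}_{M,j}\in\F$ and upward closure, and similarly for $\bar{A}_{M,j}$. For $(5)\Rightarrow(3)$ I would bypass hypercyclicity and apply Theorem~\ref{prop.Fcrit} directly to $\F$ itself (legal since $\Ft$ is a filter by hypothesis), again with $D_1,D_2$ the finitely supported vectors and $S_n=S_w^n$. On a basis vector the $\F$-limit conditions reduce to (5) by the same coordinate calculations as above. On a linear combination $x=\sum_{j\in J}a_je_j$, the sublevel set $\{n:\|T^nx\|<\epsilon\}$ contains $\bigcap_{j\in J}\bar{A}_{M_j,j}$, and the same unconditional filter-base inclusion provides some $\bar{A}_{M^*,j^*}$ with $j^*\in\N$ larger than $\max_{j\in J}|j|$ inside this intersection; since $\bar{A}_{M^*,j^*}\in\F$ by (5), so is the intersection by upward closure, and analogously for $\{n:\|S_ny\|<\epsilon\}$. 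Theorem~\ref{prop.Fcrit} then produces an $\Ft$-operator, which is (1), and hence (3) via the cycle already established.

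The step I expect to be most delicate is the handling of linear combinations in the $\F$-Transitivity Criterion when $\F$ is not itself a filter, since $\F$-limits a priori demand closure under the relevant finite intersections. The resolution in both $(3)\Rightarrow(4)$ and $(5)\Rightarrow(3)$ is the unconditional filter-base inclusion derived from boundedness of the weights: a finite intersection $\bigcap_k A_{M_k,j_k}$ (resp.\ $\bigcap_k \bar{A}_{M_k,j_k}$) is contained in a single $A_{M^*,j^*}$ (resp.\ $\bar{A}_{M^*,j^*}$), and upward closure of $\F$ closes the argument.
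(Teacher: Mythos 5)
Your proof is correct and rests on the same machinery as the paper's (explicit open sets for (2)$\Rightarrow$(3), the forward shift $S_w$ on finitely supported vectors for the $\mathcal{A}_w$-Criterion, and the inclusion $A_{M_3,j_3}\cap\bar{A}_{M_3,j_3}\subset A_{M_1,j_1}\cap\bar{A}_{M_1,j_1}\cap A_{M_2,j_2}\cap\bar{A}_{M_2,j_2}$ established just before the statement), but three steps are rewired. In (2)$\Rightarrow$(3) the paper takes $U=\{|x_j|>1/R\}\cap\{\|x\|<1\}$ and $V$ a small ball about $(N+1)e_j$, chosen \emph{disjoint} from $U$ precisely so that every $m\in N(U,V)$ is positive; your choice $U=V=B(e_j,\delta)$ lets $n=0$ into $N(U,V)$, which lies outside $A_{M,j}\cap\bar{A}_{M,j}$ — harmless since families live on $\Z_+$ so only $N(U,V)\cap\Z_+$ is at stake, but you should say so, as this is the one point where the paper is more careful than you. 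For (4)$\Rightarrow$(1) the paper proves only (4)$\Rightarrow$(2) and gets (2)$\Rightarrow$(1) from Lemma~\ref{lthick} (transitive weighted shifts are weakly mixing); your direct route via $\mathcal{A}_w\subset\F\Rightarrow\widetilde{\mathcal{A}_w}\subset\Ft$ after applying Theorem~\ref{prop.Fcrit} is a clean alternative that keeps the whole cycle self-contained and avoids Lemma~\ref{lthick} altogether. For (5)$\Rightarrow$(3) the paper shows each $A_{M,j}$ and $\bar{A}_{M,j}$ individually belongs to $\Ft$ (again via the pre-proposition estimate) and intersects inside the filter $\Ft$, whereas you re-run the Transitivity Criterion; both arguments use the hypothesis that $\Ft$ is a filter in equivalent ways, yours at the cost of repeating the coordinate computations. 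Your closing remark correctly identifies the key point that the filter-base inclusion is an unconditional consequence of boundedness of the weights and does not presuppose hypercyclicity.
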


 \begin{proof}
Obviously, (1) implies (2). The reverse implication is a consequence of Lemma~\ref{lthick} since transitive weighted shifts are weakly mixing. Also, (4) implies (2).
To show that (2) implies (3), given $N,j\in\N$ arbitrary, we must find nonempty open sets $U,V\subset X$ such that
\begin{equation}\label{incweight}
N(U,V) \subset A_{N,j}\cap \bar{A}_{N,j}.
\end{equation}
Indeed, we fix $R>N$,
\[
 U:=\{x\in X \ : \ \abs{x_j}>\frac{1}{R}\} \cap \{x\in X \ : \ \norm{x}<1\},
 \]
and we set
 \[
V=\big\{x\in X: \big\|x- (N+1)e_j\big\|<\frac{1}{R^2}\big\}.
 \]
If $m\in N(U, V)$ and $x\in U$ is such that $B_w^mx\in V$, then
 % EQUATION
 \begin{equation}
 \label{cond.concerning.j.in.F}
 \abs{\left(\prod_{i=j+1}^{j+m}w_i \right)x_{j+m}-(N+1)}<\frac{1}{R^2}<1 \ \   \mbox{ and }
 \end{equation}
 % EQUATION
 \begin{equation}
 \label{cond.concerning.j.not.in.F}
\abs{\left(\prod_{i=l+1}^{l+m}w_i \right)x_{l+m}}<\frac{1}{R^2} \ \ \mbox{ if } l\neq j.
 \end{equation}
 Since $x\in U$, we deduce from \eqref{cond.concerning.j.in.F} that
 \[
 \prod_{i=j+1}^{j+m}\abs{w_i}>\left(\prod_{i=j+1}^{j+m}\abs{w_i}\right)\abs{x_{j+m}}> N  ,
 \]
 which implies that $m\in A_{N, j}$.

 On the other hand,  $B^m_wx\in V$ forces $m>0$ since $U$ and $V$ do not intersect.   Thus, $l:=j-m\neq j$, and
  \eqref{cond.concerning.j.not.in.F} implies
 \[
 \left(\prod_{i=j-m+1}^j\abs{w_i}\right) < \left(\prod_{i=j-m+1}^j\abs{w_i}\right)R \abs{x_j}<\frac{1}{R}<\frac{1}{N},
 \]
that yields $m\in\bar{A}_{N,j}$. Thus the inclusion \eqref{incweight} is satisfied, and property (3) holds.

   To prove that (3) implies (4), since $B_w$ is hypercyclic (i.e., $\mathcal{A}_w$ is a filter) and $\mathcal{A}_w\subset \F$
   because $\F$ contains a basis of $\mathcal{A}_w$, we just need to show that $B_w$ satisfies the $\mathcal{A}_w$-criterion.

Let $D$ be the set of all finitely supported vectors in $X$ and let $S_w$ be the weighted forward shift defined on $D$ by
  \[
  S_w e_i:=\frac{1}{w_{i+1}} e_{i+1}.
  \]
If we consider $S_n:=S^n_w$ then we have $B_w^nS_nx=x$ for every $x\in D$. It suffices to show that
\begin{itemize}
\item $\mathcal{A}_w$-$\lim_nB_w^n x=0$ for every $x\in D$;
\item $\mathcal{A}_w$-$\lim_n S_n x=0$ for every $x\in D$.
\end{itemize}

For the rest of the proof we assume that $X=\ell^p(\mathbb{Z})$ with $1\le p<\infty$. The proof is similar if $X=c_0(\Z)$.
Let $x\in D$, $\varepsilon>0$ and $V_\varepsilon:=\{x\in \ell^p(\Z): \norm{x}<\varepsilon\}$.
First, we show that $ \{n\in\mathbb{N}: B_w^n x\in V_{\varepsilon}\}\in\mathcal{A}_w$. Since $x\in D$, we can write $x=\sum_{j=-m}^mx_je_j$ for some $m\in\N$ and we then have
   \[
   B_w^nx= \sum_{j=-m-n}^{m-n}  \left(\prod_{i=j+1}^{j+n} w_i\right)x_{j+n} e_{j}.
 \]
  Let $M=\|x\|_{\infty} 2m/\varepsilon$ and $n\in \bigcap_{j=-m}^m \bar{A}_{M, j}\in \mathcal{A}_w$. We have
   \[
\norm{B_w^nx}^p=\sum_{j=-m}^m \abs{\prod_{i=j-n+1}^j w_i}^p\abs{x_j}^p<\sum_{j=-m}^m  \left(\frac{\varepsilon}{\|x\|_{\infty} 2m}\right)^p\abs{x_j}^p<\varepsilon ^p,
   \]
   which implies
   \[
   \bigcap_{j=-m}^m \bar{A}_{M,j}\subseteq \{n\in \N: B_w^ny\in V_\varepsilon\},
   \]
thus $ \{n\in \N: B_w^ny\in V_\varepsilon\} \in \mathcal{A}_w$. It remains to show that $ \{n\in\N: S_n x\in V_{\varepsilon}\}\in\mathcal{A}_w$. Indeed, we have
   \[
   S_n x=S^n_w x= \sum_{j=-m}^m \frac{x_{j}}{\prod_{i=j+1}^{j+n} w_i} e_{j+n}.
   \]
  Let $M=\|x\|_{\infty}2m /\varepsilon$ and $n\in \bigcap_{j=-m}^m A_{M, j}$. We then have
   \[
\norm{S_nx}^p=\sum_{j=-m}^m \abs{\frac{x_j}{\prod_{i=j+1}^{j+n} w_i}}^p<\frac{2m \varepsilon^p}{(2m)^p} \leq \varepsilon ^p,
   \]
   which implies
   \[
   \bigcap_{j=-m}^m A_{M,j}\subseteq \{n\in \N: S_ny\in V_\varepsilon\}.
   \]
Consequently, $\{n\in \N: S_ny\in V_\varepsilon\} \in \mathcal{A}_w$, and $B_w$ is an $\F$-operator.

Certainly, condition (3) implies (5). If (5) holds, the argument preceding this Proposition yields that, for each $j\in\N$ and for every $M>0$, the sets $A_{M,j}$ and $\bar{A}_{M,j}$ belong to $\Ft$, which
gives (3) since $\Ft$ is a filter.
\end{proof}

When $\F=\I^*$ is the filter of cofinite sets, we obtain as a consequence the well known characterization of mixing bilateral weighted shifts. On the other hand, the case $\F=\mathcal{S}$ offers again an interesting result.

 % COROLLARY
  \begin{corollary}
  \label{tebws}
  Let $B_w$ be a  bilateral weighted backward shift on $X=c_0(\Z)$ or $\ell^p(\Z)$, $1\leq p< \infty$.  Then the following are equivalent:
\begin{enumerate}
\item[(1)] $B_w$ is a topologically ergodic operator;
\item[(2)] for every $j\in \N$ and $M>0$, $A_{M, j}$ and $\bar{A}_{M, j}$ are syndetic sets.
\end{enumerate}
\end{corollary}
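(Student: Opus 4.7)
The plan is to deduce the corollary as a direct consequence of Proposition~\ref{bilateral.weighted.shifts.F.oper} applied with the family $\F=\mathcal{S}$ of syndetic sets. By Definition~\ref{Fop} and the identification made in the introduction, $B_w$ being topologically ergodic is exactly the same as $B_w$ being an $\mathcal{S}$-operator, so condition (1) of the corollary is precisely condition (2) of the proposition specialized to $\F=\mathcal{S}$.

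To invoke the equivalence with condition (5) of Proposition~\ref{bilateral.weighted.shifts.F.oper}, I need to verify that $\widetilde{\mathcal{S}}$ is a filter. This has already been pointed out in Remark~\ref{remfcrit}(1): namely, $\widetilde{\mathcal{S}}=\mathcal{TS}$, the family of thickly syndetic sets, which is a filter. With this observation in hand, Proposition~\ref{bilateral.weighted.shifts.F.oper} gives that $B_w$ is an $\mathcal{S}$-operator if and only if for every $j\in\N$ and $M>0$ the sets $A_{M,j}$ and $\bar{A}_{M,j}$ belong to $\mathcal{S}$, i.e.\ are syndetic. This is exactly condition (2) of the corollary, finishing the proof.

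There is essentially no obstacle: the whole content of the corollary is contained in the general proposition, and the only thing to check is that $\widetilde{\mathcal{S}}$ is a filter, which was recorded earlier. If anything, the mildly delicate point worth flagging is that the family $\mathcal{S}$ itself is not a filter, so the equivalence of (3) and (5) in Proposition~\ref{bilateral.weighted.shifts.F.oper} cannot be applied directly to an arbitrary family; it is crucial here that the enlargement $\widetilde{\mathcal{S}}=\mathcal{TS}$ happens to be a filter, which is what allows the individual syndeticity of the sets $A_{M,j}$ and $\bar{A}_{M,j}$ (condition (5)) to be upgraded to the simultaneous syndeticity of their pairwise intersections (condition (3)).
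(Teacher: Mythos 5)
Your proposal is correct and is exactly the deduction the paper intends: the corollary is stated as an immediate consequence of Proposition~\ref{bilateral.weighted.shifts.F.oper} for $\F=\mathcal{S}$, using that $\widetilde{\mathcal{S}}=\mathcal{TS}$ is a filter so that the equivalence with condition (5) applies. Your flagged subtlety (that $\mathcal{S}$ itself is not a filter, so the filter hypothesis must be checked for $\widetilde{\mathcal{S}}$) is precisely the right point to make.
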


%A similar result to Proposition~\ref{bilateral.weighted.shifts.F.oper} can be stated for unilateral weighted backward shifts. For these operators, the equivalent conditions only rely on the properties of the sets $A_{M,j}$.

The unilateral version of Proposition~\ref{bilateral.weighted.shifts.F.oper} we provide next relies only on the sets $A_{M,j}$. Notice that for a hypercyclic unilateral weighted shift $B_w$ the collection of sets $\{ A_{M,j}  \ : \ M>0 \mbox{ and } j\in \N\}$ forms a base of a filter (which we call again $\mathcal{A}_w$) since, as before, if  $M_1,M_2>0$ and  $j_1,j_2,j_3\in\N$ with $j_3>\max\{j_1,j_2\}$, then there is $M_3>0$ such that
  \[
  A_{M_3,j_3}\subset A_{M_1,j_1} \cap A_{M_2,j_2}.
  \]
This fact yields a simplification of the corresponding characterization of unilateral weighted shifts that are $\F$-operators, which can be further simplified if $\F$ is a shift$_-$-invariant family.
The unilateral version of Proposition~\ref{bilateral.weighted.shifts.F.oper} can be stated as follows.

  % PROPOSITION
\begin{proposition} \label{weighted.shifts.F.oper}
Let $B_w$ be an unilateral weighted backward shift on $c_0(\Z_+)$ or $\ell^p(\Z_+)$ ($1\leq p< \infty$). The following are equivalent:
\begin{enumerate}
\item[(1)] $B_w$ is an $\Ft$-operator;
\item[(2)] $B_w$ is an $\F$-operator;
\item[(3)]  for every $j\in \N$ and $M>0$,  the set $A_{M,j}\in \F$;
\item[(4)] $B_w$ is hypercyclic, $\mathcal{A}_w\subset \F$, and $B_w$ satisfies the $\mathcal{A}_w$-Criterion.
\end{enumerate}
If in addition  $\mathscr{F}$ is shift$_-$-invariant, the above conditions are equivalent to
\begin{enumerate}
\item[(5)] for every $M>0$ the set $A_{M}\in \F$.
\end{enumerate}
\end{proposition}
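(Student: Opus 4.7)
The plan is to follow the proof of Proposition~\ref{bilateral.weighted.shifts.F.oper} closely; the unilateral setting is strictly simpler because only the sets $A_{M,j}$ come into play, not their counterparts $\bar{A}_{M,j}$. Lemma~\ref{lthick} together with the classical fact that a transitive unilateral weighted shift is automatically weakly mixing will give (2)$\Rightarrow$(1), while (1)$\Rightarrow$(2) is immediate from $\Ft\subset\F$. The implication (4)$\Rightarrow$(1) will follow by applying the $\F$-Transitivity Criterion (Theorem~\ref{prop.Fcrit}) with the filter $\mathcal{A}_w$ (whose enlargement $\widetilde{\mathcal{A}_w}$ is a filter by Lemma~\ref{techft}), yielding that $B_w$ is an $\widetilde{\mathcal{A}_w}$-operator, and then noting $\widetilde{\mathcal{A}_w}\subset\Ft$ since $\mathcal{A}_w\subset\F$.

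For (2)$\Rightarrow$(3) I reuse the test pair of open sets from the bilateral proof: given $j\in\N$ and $M>0$, fix $R>1$ and set
\[
U:=\{x\in X:|x_j|>1/R\}\cap\{x\in X:\|x\|<1\},\qquad V:=\{x\in X:\|x-(M+1)e_j\|<1/R^2\}.
\]
For any $m\in N(U,V)$ with witness $x\in U$ such that $B_w^m x\in V$, reading off the $j$-th coordinate gives $|(\prod_{i=j+1}^{j+m}w_i)x_{j+m}-(M+1)|<1$; combined with $|x_{j+m}|\le\|x\|<1$ this forces $\prod_{i=j+1}^{j+m}|w_i|>M$, so $m\in A_{M,j}$ and $N(U,V)\subset A_{M,j}$. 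The absence of any constraint coming from negative indices is precisely why the unilateral version dispenses with the $\bar{A}_{M,j}$.

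For (3)$\Rightarrow$(4) I will use the inclusions recalled just before the proposition statement: for any finitely many pairs $(M_i,j_i)$ there exist $M_*$ and $j_*$ with $A_{M_*,j_*}\subset\bigcap_i A_{M_i,j_i}$. This shows $\{A_{M,j}\}_{M>0,\,j\in\N}$ is a filter base, so $\mathcal{A}_w$ is a filter and $B_w$ is hypercyclic; moreover (3) gives $\mathcal{A}_w\subset\F$. To verify the $\mathcal{A}_w$-Criterion I take $D_1=D_2=D$, the finitely supported vectors, and $S_n:=S_w^n$ with $S_w e_i:=w_{i+1}^{-1}e_{i+1}$, so that $B_w^n S_n=\mathrm{id}$ on $D$. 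If $x\in D$ is supported in $[0,m]$, then $B_w^n x=0$ for $n>m$, hence the forward-orbit condition holds trivially; for the backward-orbit condition, the estimate
\[
\|S_n x\|^p=\sum_{j=0}^m\Big|\frac{x_j}{\prod_{i=j+1}^{j+n}w_i}\Big|^p
\]
yields $\bigcap_{j=0}^m A_{M',j}\subset\{n:\|S_n x\|<\varepsilon\}$ for a suitable $M'=M'(x,\varepsilon)$, and this intersection lies in $\mathcal{A}_w$.

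The implication (3)$\Rightarrow$(5) is trivial (take $j=0$). For (5)$\Rightarrow$(3) under shift$_-$-invariance I set $C_j:=\prod_{i=1}^j|w_i|$ and observe the direct identity $A_{M,j}=(A_{MC_j}-j)\cap\Z_+$, which promotes $A_{MC_j}\in\F$ to $A_{M,j}\in\F$ via shift$_-$-invariance. The main obstacle is keeping the index shifts straight in the filter-base reductions, but no step requires a new idea beyond the bilateral argument.
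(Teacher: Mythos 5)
Your argument matches the paper's: the published proof establishes only (5)$\Rightarrow$(3) explicitly and refers all the remaining equivalences to the bilateral proof of Proposition~\ref{bilateral.weighted.shifts.F.oper}, which you reproduce faithfully with the correct unilateral simplifications (no $\bar{A}_{M,j}$, and $B_w^n x=0$ eventually for finitely supported $x$, so the forward condition of the $\mathcal{A}_w$-Criterion is automatic). Your (5)$\Rightarrow$(3) via the exact identity $A_{M,j}=(A_{MC_j}-j)\cap\Z_+$ is a marginally cleaner form of the paper's inclusion $A_{M'}-j\subset A_{M,j}$ for $M'>M(\sup_i\abs{w_i})^j$, but it rests on the same telescoping of the weight products.
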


\begin{proof}
We only prove that if $\mathscr{F}$ is shift$_-$-invariant then condition (5) implies (3). Let $M>0$ and $j\in\N$. We fix $M'>M(\sup_{i\in\N}\abs{w_i})^j$ such that $A_{M'}\subset [j+1,+\infty [$.  Given $n\in A_{M'}$, we have
  \[
  \prod_{s=j+1}^{n}\abs{w_s}=\frac{\prod_{s=1}^{n}\abs{w_s}}{\prod_{s=1}^{j}\abs{w_s}}> \frac{M'}{(\sup_{i\in\N}\abs{w_i})^j}> M.
  \]
This implies that $A_{M'}-j\subset A_{M, j}$.
Since $\mathscr{F}$ is a shift$_-$-invariant family, we conclude that $A_{M, j}\in \F$.
 \end{proof}

In consequence we have the following characterization of topologically ergodic unilateral backward weighted shifts.

% COROLLARY
\begin{corollary} \label{teuws}
Let $B_w$ be an unilateral weighted backward shift on $X=c_0(\Z_+)$ or $\ell^p(\Z_+)$, $1\leq p< \infty$, then the following are equivalent:
\begin{enumerate}
\item[(1)] $B_w$ is topologically ergodic;
\item[(2)] for every $M>0$ the set $A_{M}$ is syndetic.
\end{enumerate}
\end{corollary}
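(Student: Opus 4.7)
The plan is to derive this corollary as a direct specialization of Proposition~\ref{weighted.shifts.F.oper} to the family $\F = \mathcal{S}$ of syndetic sets. By our definitions, $B_w$ being topologically ergodic means exactly that $B_w$ is $\mathcal{S}$-transitive, so condition~(1) of the corollary coincides with condition~(2) of Proposition~\ref{weighted.shifts.F.oper} with $\F=\mathcal{S}$. To invoke the equivalence $(2)\Leftrightarrow(5)$ of that proposition, we only need to verify the hypothesis that $\mathcal{S}$ is shift$_-$-invariant.

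The verification of shift$_-$-invariance is immediate: if $A\subset \Z_+$ has bounded gaps, say bounded by some $L\in\N$, then for every $i\in\Z_+$ the translated set $(A-i)\cap \Z_+$ still has gaps bounded by $L$, so it remains syndetic. Thus $\mathcal{S}$ is shift$_-$-invariant (in fact it is shift invariant, but only the left-shift property is required here).

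With this in hand, the equivalence $(2)\Leftrightarrow(5)$ of Proposition~\ref{weighted.shifts.F.oper} yields that $B_w$ is $\mathcal{S}$-transitive if and only if $A_M\in\mathcal{S}$ for every $M>0$, which is precisely condition~(2) of the corollary. No genuine obstacle arises; the proof is a one-line invocation of the previous proposition, with the only verification being the trivial shift$_-$-invariance of $\mathcal{S}$.
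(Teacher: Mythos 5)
Your proof is correct and follows exactly the paper's route: the corollary is stated there as an immediate consequence of Proposition~\ref{weighted.shifts.F.oper} applied to the family $\mathcal{S}$ of syndetic sets, using the equivalence of topological ergodicity with being an $\mathcal{S}$-operator and the shift invariance of $\mathcal{S}$ (already recorded in Section~\ref{ftrans}). Nothing further is needed.
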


We conclude this section by considering finite products of $\F$-maps.

\begin{proposition}\label{powandsum}
Let  $T_1,\dots,T_m$ be continuous maps on $X$, then
\begin{enumerate}
\item[(1)] for $n\ge 1$, $T_1^n$ is an $\mathscr{F}$-map on $X$ if and only if $T_1$ is an $\F_n$-map where
$\F_n:=\{A\subset \Z_+: \frac{1}{n}(A\cap n\Z_+)\in \mathscr{F}\}$. In other words, $T_1^n$ is an $\mathscr{F}$-map on $X$ if and only if for every $U,V\in \mathcal{U}(X)$, $N_{T_1}(U,V)\cap n\Z_+ \in n\F$.
\item[(2)] If $\F$ is a filter then $T_1\times T_2\times \dots \times T_m$ is an $\mathscr{F}$-map on $X^m$ if and only if $T_l$ is an $\mathscr{F}$-map on $X$ for every $1\le l\le m$.
\end{enumerate}
\end{proposition}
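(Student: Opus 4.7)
For part~(1), the plan rests on the identity
\begin{equation*}
N_{T_1^n}(U, V) \;=\; \{k \ge 0 : T_1^{nk}(U) \cap V \ne \emptyset\} \;=\; \tfrac{1}{n}\bigl(N_{T_1}(U, V) \cap n\Z_+\bigr),
\end{equation*}
valid for every $U, V \in \mathcal{U}(X)$ and immediate from $(T_1^n)^k = T_1^{nk}$. I would first verify that $\F_n$ is indeed a family: the hereditarily upward property transfers directly from $\F$ (since intersecting with $n\Z_+$ and scaling by $1/n$ preserve inclusions), and every $A \in \F_n$ is infinite because $\tfrac{1}{n}(A\cap n\Z_+) \in \F$ is infinite. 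Unwinding the definition of $\F_n$ together with the identity above then shows that the condition $N_{T_1^n}(U, V) \in \F$ for all $U, V$ is literally the same as $N_{T_1}(U, V) \in \F_n$ for all $U, V$. The reformulation with $n\F := \{nA : A\in \F\}$ is only a rewriting of the same equivalence via the same identity.

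For part~(2), the direction ``$\Rightarrow$'' requires no filter hypothesis: given any $l$ and $U_l, V_l \in \mathcal{U}(X)$, the slab-shaped open sets $U := X^{l-1} \times U_l \times X^{m-l}$ and $V := X^{l-1} \times V_l \times X^{m-l}$ satisfy
\begin{equation*}
N_{T_1 \times \cdots \times T_m}(U, V) \;=\; N_{T_l}(U_l, V_l),
\end{equation*}
so the $\F$-transitivity of the product forces $N_{T_l}(U_l, V_l) \in \F$.

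For the converse ``$\Leftarrow$'', the plan is to first reduce to basic open rectangles. Given arbitrary $U, V \in \mathcal{U}(X^m)$, I would pick $U_1 \times \cdots \times U_m \subset U$ and $V_1 \times \cdots \times V_m \subset V$ with each $U_l, V_l \in \mathcal{U}(X)$, and use
\begin{equation*}
N_{T_1 \times \cdots \times T_m}\bigl(U_1 \times \cdots \times U_m,\, V_1 \times \cdots \times V_m\bigr) \;=\; \bigcap_{l=1}^m N_{T_l}(U_l, V_l).
\end{equation*}
Each factor on the right lies in $\F$ by hypothesis, so the filter property places the intersection in $\F$, and the hereditarily upward property lifts this to $N(U, V) \in \F$. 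The filter hypothesis is used precisely at this single point; without it, a finite intersection of sets in $\F$ can fall outside $\F$ (as already observed for $\F = \mathcal{S}$), so the assumption is essential rather than decorative. No serious obstacle appears in either part beyond correctly identifying the return sets and invoking the filter axiom at this step.
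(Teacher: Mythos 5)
Your proposal is correct and follows essentially the same route as the paper: part (1) rests on the identity $N_{T_1^n}(U,V)=\tfrac{1}{n}\bigl(N_{T_1}(U,V)\cap n\Z_+\bigr)$, and part (2) on the identity $N_{T_1\times\cdots\times T_m}(U_1\times\cdots\times U_m,V_1\times\cdots\times V_m)=\bigcap_{l=1}^m N_{T_l}(U_l,V_l)$ together with the filter property. Your write-up is in fact slightly more careful than the paper's, since it makes explicit the reduction to open rectangles, the slab sets showing the forward direction of (2) needs no filter hypothesis, and the check that $\F_n$ is a family.
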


\begin{proof}
(1) If $n\geq 1$, then $T_1^n$ is an $\mathscr{F}$-map on $X$ if and only if $N_{T^n_1}(U,V)\in \F$ for every $U,V \in \mathcal{U}(X)$. We remark that $N_{T^n_1}(U,V)=\frac{1}{n}(N_{T_1}(U,V)\cap n\Z_+)$. Therefore, $N_{T^n_1}(U,V)\in \F$ if and only if $N_{T_1}(U,V)\in \F_n$.

(2) Note that $T_1\times T_2\times \dots \times T_m$ is an $\mathscr{F}$-map on $X^m$ if and only if $\bigcap_{l=1}^m N_{T_l}(U_l, V_l)\in \F$, for any  $(U_l, V_l)_{l=1}^m\in (\mathcal{U}(X)\times \mathcal{U}(X))^m$. The conclusion follows since $\F$ is a filter.
\end{proof}

Hence by  Proposition \ref{bilateral.weighted.shifts.F.oper} and Proposition~\ref{powandsum} we have the following corollary.
 % COROLLARY
 \begin{corollary}
 \label{corol.weighted.shifts.F.oper}
 Let $\mathscr{F}$ be a filter and $B_w$ be a bilateral weighted backward shift on $X=\ell^p(\Z)$ or $c_0(\Z)$. Then, for every $m\in \N$, the following are equivalent:

(1) $B_w\oplus B_w^2\oplus ... \oplus B_w^m$ is an $\mathscr{F}$-operator on $X^m$;

(2) For every $1\leq l\leq m$, $M>0$ and $j\in \Z$, $A_{M, j}\cap l\Z_+\in l\F$ and $\bar{A}_{M, j}\cap l\Z_+\in l\F$.

 \end{corollary}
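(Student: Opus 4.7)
The plan is to chain together the two preceding results: use Proposition~\ref{powandsum} to reduce the statement about the finite direct sum $B_w\oplus B_w^2\oplus\dots\oplus B_w^m$ to a statement about each individual power $B_w^l$ and then apply Proposition~\ref{bilateral.weighted.shifts.F.oper} to each $B_w^l$.

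First, since $\F$ is assumed to be a filter, Proposition~\ref{powandsum}(2) applies and gives that $B_w\oplus B_w^2\oplus\dots\oplus B_w^m$ is an $\F$-operator on $X^m$ if and only if $B_w^l$ is an $\F$-operator on $X$ for every $1\le l\le m$. Next, by Proposition~\ref{powandsum}(1), for each such $l$ the operator $B_w^l$ is an $\F$-operator precisely when $B_w$ is an $\F_l$-operator, where
\[
\F_l=\big\{A\subset \Z_+\,:\, \tfrac{1}{l}(A\cap l\Z_+)\in \F\big\}.
\]
It is straightforward to check that $\F_l$ is itself a filter whenever $\F$ is: it is hereditarily upward by construction, and closure under finite intersections follows from the identity $\tfrac{1}{l}((A\cap B)\cap l\Z_+)=\tfrac{1}{l}(A\cap l\Z_+)\cap \tfrac{1}{l}(B\cap l\Z_+)$ together with the filter property of $\F$. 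In particular, Lemma~\ref{techft} yields that $\widetilde{\F_l}$ is also a filter, so Proposition~\ref{bilateral.weighted.shifts.F.oper} (specifically the additional equivalence with condition (5) under the hypothesis that $\widetilde{\F_l}$ is a filter) applies to $B_w$ viewed as a candidate $\F_l$-operator.

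Applying that equivalence, $B_w$ is an $\F_l$-operator if and only if $A_{M,j}\in \F_l$ and $\bar{A}_{M,j}\in \F_l$ for every $M>0$ and every $j\in\N$ (equivalently, every $j\in\Z$, by the filter-base argument preceding Proposition~\ref{bilateral.weighted.shifts.F.oper}). Unpacking the definition of $\F_l$, these two memberships translate precisely into $A_{M,j}\cap l\Z_+\in l\F$ and $\bar{A}_{M,j}\cap l\Z_+\in l\F$. Combining the reductions for $l=1,\dots,m$ then yields the desired equivalence between (1) and (2).

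There is no substantial obstacle: the proof is really a bookkeeping exercise that stitches together Propositions~\ref{powandsum} and~\ref{bilateral.weighted.shifts.F.oper}. The one point that requires a brief verification is that $\F_l$ inherits the filter property from $\F$, so that condition (5) of Proposition~\ref{bilateral.weighted.shifts.F.oper} is available at the $\F_l$ level and one does not need to work with the intersection $A_{M,j}\cap \bar{A}_{M,j}$ instead of the two sets separately.
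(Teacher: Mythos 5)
Your proof is correct and is exactly the chaining of Propositions~\ref{powandsum} and~\ref{bilateral.weighted.shifts.F.oper} that the paper intends (the paper gives no further detail), including the necessary verification that $\F_l$ is a filter. The only minor remark is that the detour through Lemma~\ref{techft} and condition (5) is not strictly needed: once $\F_l$ is known to be a filter, $A_{M,j}\cap\bar{A}_{M,j}\in\F_l$ in condition (3) is already equivalent to having $A_{M,j}\in\F_l$ and $\bar{A}_{M,j}\in\F_l$ separately.
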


 % SECTION
 \section{Return sets and densities}\label{return}

  The purpose of this section is to analyze which kind of density properties the sets $N(U, V)$ can have for a given hypercyclic operator, and classify the hypercyclic operators accordingly.
We first recall the definitions of the asymptotic densities and the Banach densities in $\Z_+$.
\begin{definition}
   % \textbf{The asymptotic density}:
 Let  $A\subseteq \Z_+$ be given. The \emph{upper and lower asymptotic density}  of $A$ are defined respectively by
   \[
   \overline{d}(A)=\limsup_{n\to\infty}\frac{|A\cap \{1, 2, ..., n\}|}{n} \mbox{ and } \underline{d}(A)=\liminf_{n\to\infty}\frac{|A\cap \{1, 2, ..., n\}|}{n}.
   \]
%    \[
   %\text{and}\quad \underline{d}(A)=\liminf_{n\to\infty}\frac{|A\cap \{1, 2, ..., n\}|}{n}.
   %\]
The \emph{upper and lower Banach density} of $A$ are defined by
    \[
    \overline{Bd}(A)=\lim_{s\to\infty}\alpha^s/s  \mbox{\quad and \quad} \underline{Bd}(A)=\lim_{s\to\infty}\alpha_s/s,
    \]
where for each $s\in\Z_+$
   %  \textbf{The Banach density}: Let  $A\subseteq \Z_+$. For every real number $s\geq 1$ define
 \[\alpha^s=\limsup_{k\to \infty} |A\cap [k+1, k+s]|\quad \text{and} \quad \alpha_s=\liminf_{k\to \infty} |A\cap [k+1, k+s]|.\]
\end{definition}

%It is well-known that $\alpha^s/s$ and $\alpha_s/s$ tend to a limit when $s$ tends to infinity and the \emph{upper and lower Banach density} can therefore be defined by
 %   \[
%    \overline{Bd}(A)=\lim_{s\to\infty}\alpha^s/s  \mbox{\quad and \quad} \underline{Bd}(A)=\lim_{s\to\infty}\alpha_s/s.
%    \]
In general we have $\underline{Bd}(A)\leq \underline{d}(A)\leq \overline{d}(A) \leq \overline{Bd}(A)$ and
  \begin{equation}\label{densitats}
   \underline{d}(A)+ \overline{d}(\Z_+\setminus A) =1 \ \mbox{ and } \
  \underline{Bd}(A)+ \overline{Bd}(\Z_+\setminus A) =1.
  \end{equation}
  We will consider the following families. 
\[\overline{\mathcal{D}}=\{A\subseteq\Z_+: \overline{d}(A)>0\}, \quad  \underline{\mathcal{D}}=\{A\subseteq\Z_+: \underline{d}(A)>0\},\]
\[\underline{\mathcal{BD}}=\{A\subseteq\Z_+: \underline{Bd}(A)>0\}, \quad \overline{\mathcal{BD}}=\{A\subseteq\Z_+: \overline{Bd}(A)>0\},\]
\[\overline{\mathcal{D}}_1=\{A\subseteq\Z_+: \overline{d}(A)=1\}, \quad  \underline{\mathcal{D}}_1=\{A\subseteq\Z_+: \underline{d}(A)=1\},\]
\[\underline{\mathcal{BD}}_1=\{A\subseteq\Z_+: \underline{Bd}(A)=1\}, \quad  \overline{\mathcal{BD}}_1=\{A\subseteq\Z_+: \overline{Bd}(A)=1\}.\]

Notice that each of these families is shift invariant, and that  $\underline{\mathcal{D}}_1$ and $\underline{\mathcal{BD}}_1$ are filters. Moreover,

\begin{enumerate}
\item $\overline{\mathcal{BD}}_1=\mathcal{T}$, the family of thick sets,
\item  $\underline{\mathcal{BD}}=\mathcal{S}$, the family of syndetic sets,
\item  $\overline{\mathcal{BD}} \supset \PS$, the family of piecewise syndetic sets,
\item $\underline{\mathcal{BD}}_1 \subset \mathcal{TS}$, the family of thickly syndetic sets,
\item $\underline{\mathcal{BD}}_1=\overline{\mathcal{BD}}^*$, $\underline{\mathcal{D}}_1=\overline{\mathcal{D}}^*$, $\overline{\mathcal{D}}_1= \underline{\mathcal{D}}^*$, and $\overline{\mathcal{BD}}_1=\underline{\mathcal{BD}}^*$ by \eqref{densitats}.
\end{enumerate}

In consequence,  $T$ is weakly mixing if and only if $T$ is a  $\overline{\mathcal{BD}}_1$-map.

Weighted shift operators and Proposition~\ref{weighted.shifts.F.oper} help us to provide some counterexamples which allow us to distinguish the different notions of $\F$-operators.

\begin{proposition}
\label{yellow}
 Let $X=c_0(\Z_+)$, then

(1) there exists a $\overline{\mathcal{BD}}_1$-operator which is not $\overline{\mathcal{D}}$-operator.

(2) there exists a $\overline{\mathcal{D}}_1$-operator which is not $\underline{\mathcal{D}}$-operator.

(3) there exists a $\underline{\mathcal{D}}_1$-operator which is not $\underline{\mathcal{BD}}$-operator.

\end{proposition}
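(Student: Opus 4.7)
The plan is to invoke Proposition~\ref{weighted.shifts.F.oper}: since every family involved in the three dichotomies is shift-invariant, a unilateral weighted backward shift $B_w$ on $c_0(\Z_+)$ is an $\F$-operator exactly when $A_M:=\{n\in\N : \prod_{j=1}^n |w_j|>M\}\in\F$ for every $M>0$. Throughout I choose $|w_n|\in\{1/2,1,2\}$, so that $s_n:=\log\prod_{j=1}^n |w_j|$ is a $(\log 2)$-Lipschitz altitude path on $\Z_+$ with $A_M=\{n : s_n>\log M\}$. Each part then amounts to designing such an altitude path with the prescribed superlevel-set densities. The only recurring nuisance is that height transitions of $s$ force ramp regions of length $O(\log M)$; these are absorbed by making the spacings in each construction grow (super-)exponentially.

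For (1), I place sparse bumps: choose positions $a_k$ with $a_{k+1}\ge 2^{a_k}$, let $s_n$ ramp up from $0$ to $k\log 2$ over $k$ steps, stay at height $k\log 2$ for $k$ more steps, then ramp back down to $0$ over another $k$ steps; put $s_n=0$ elsewhere. Each plateau is an interval of length $k$ lying in $A_M$ once $k\ge\log_2 M$, so $A_M$ is thick for every $M>0$. On the other hand $A_1\subset\bigcup_k[a_k,a_k+3k]$, so $|A_1\cap[1,a_K+3K]|=O(K^2)$, which is negligible next to the super-exponential $a_K$; thus $\overline{d}(A_1)=0$ and $B_w$ is a $\overline{\mathcal{BD}}_1$-operator that is not a $\overline{\mathcal{D}}$-operator.

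For (2), split $\N$ into consecutive blocks $B_1,B_2,\ldots$ with $|B_{j+1}|\ge (j+1)\,b_j$, where $b_j:=\sum_{i\le j}|B_i|$, so that $|B_j|/b_j\to 1$. On each even block $B_{2j}$ ramp $s_n$ up to height $j\log 2$ over its first $j$ positions, hold it across the block, then ramp back to $0$ over the last $j$ positions; on each odd block set $s_n=0$. For any $M>0$ and $j>\log_2 M$, $A_M$ covers $B_{2j}$ outside $O(j)$ boundary points, so $|A_M\cap[1,b_{2j}]|/b_{2j}\to 1$, giving $\overline{d}(A_M)=1$. But $A_1\cap B_{2j+1}=\emptyset$, so $|A_1\cap[1,b_{2j+1}]|/b_{2j+1}\le b_{2j}/b_{2j+1}\to 0$, giving $\underline{d}(A_1)=0$. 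Thus $B_w$ is a $\overline{\mathcal{D}}_1$-operator that is not a $\underline{\mathcal{D}}$-operator.

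For (3), dually insert sparse valleys into a path with growing peaks. Take $g_k=2^k$, set $V_k=[g_k,g_k+k]$ and $s_n=0$ on each $V_k$; in the gap between $V_k$ and $V_{k+1}$ (of length $2^k-k$, which accommodates the ramps for $k$ large) let $s_n$ ramp up from $0$ to height $k\log 2$, hold it, and ramp back to $0$. For any $M>0$, once $k\ge\log_2 M$ the $k$-th gap contributes only $O(\log_2 M)$ points to $A_M^c$; hence $|A_M^c\cap[1,g_{K+1}]|\le\sum_{k\le K+1}k+O(K\log_2 M)=O(K^2)$, which is $o(g_{K+1})=o(2^{K+1})$, so $\overline{d}(A_M^c)=0$ and $\underline{d}(A_M)=1$. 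However every $V_k$ lies in $A_1^c$, so $A_1^c$ contains arbitrarily long intervals and $A_1$ is not syndetic; therefore $B_w$ is a $\underline{\mathcal{D}}_1$-operator that is not a $\underline{\mathcal{BD}}$-operator.
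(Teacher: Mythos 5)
Your proof is correct and follows essentially the same route as the paper: both reduce each part via Proposition~\ref{weighted.shifts.F.oper} (using shift-invariance of the families) to constructing a weight sequence whose sets $A_M$ have the prescribed density properties, and the three block constructions (sparse bumps, alternating long blocks, sparse valleys) match the paper's up to cosmetic differences such as using $k$ steps of weight $1/2$ in place of a single weight $2^{-k}$ to return the partial products to $1$.
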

\begin{proof}
(1) Consider the weight sequence
\[
w=(\underbrace{1, \dots, 1}_{m_0}, 2,  2^{-1}, \underbrace{1, \dots, 1}_{m_1}, 2, 2, 2^{-2}, \underbrace{1, \dots, 1}_{m_2}, 2, 2, 2, 2^{-3}, \underbrace{1, \dots, 1}_{m_3}, \dots)
\]
We first observe that $\sup_n\prod_{i=1}^nw_i$ is infinite, hence $B_w$ is weakly mixing, see Chapter 4 in \cite{GrPe}. In other words $B_w$ is $\overline{\mathcal{BD}}_1$-operator.

On the other hand, by Proposition \ref{weighted.shifts.F.oper}, we know that it suffices to show that
$\overline{d}(A_1)=0$ in order to deduce that $B_w$ is not a $\overline{\mathcal{D}}$-operator. In other words, it suffices to show that $\overline{d}\Big(\Big\{n\in \N: \prod_{i=1}^n w_i> 1\Big\}\Big)=0$ and this holds if $(m_k)$ grows sufficiently rapidly.

(2) Consider the weight
\[
w=\big(\underbrace{1,\cdots, 1}_{m_0}, \underbrace{2,\cdots, 2}_{n_0}, \underbrace{2^{-n_0}, 1,\cdots, 1}_{m_1}, \underbrace{2,\cdots, 2}_{n_1}, \underbrace{2^{-n_1}, 1,\cdots, 1}_{m_2}, \underbrace{2,\cdots, 2}_{n_2}, \cdots\big).
\]
Thanks to Proposition \ref{weighted.shifts.F.oper}, it suffices to find sequences $(m_k)_k, (n_k)_k$ such that
 \begin{itemize}
 \item $\overline{d}\big(\{n: \prod_{i=1}^n w_i=1\}\big)=1$
 \item $\overline{d}(A_M)=\overline{d} \big(\{n: \prod_{i=1}^n w_i>M \}\big)=1$, for every $M>0$.
 \end{itemize}
Indeed, if $\overline{d}\big(\{n: \prod_{i=1}^n w_i=1\}\big)=1$ then
\[\underline{d}\big(\{n: \prod_{i=1}^n w_i>1\}\big)=1-\overline{d}\big(\{n: \prod_{i=1}^n w_i\le 1\}\big)=0.\]
Define sequences of intervals in the following way: $ \A_{k}=[10^{2^{2k+1}}, 10^{2^{2k+2}}[$ and $\B_k=[10^{2^{2k+2}}, 10^{2^{2k+3}}[$ for every $k\in \Z_+$.

So $\A=\bigcup_{k\in \N}\A_k$ and $\B=\bigcup_{k\in \N}\B_k$ are disjoint with $\overline{d}(\A)=\overline{d}(\B)=1$. Hence, setting $m_k=|\A_k|, n_k=|\B_k|$ for every $k$, we are done.

(3) Let $m_k=10^{2^k}$ for every $k\in \Z_+$. We consider the weight
\[
w=\big(1, 2, 2^{-1}, 1, 1, \underbrace{2, \cdots, 2}_{m_0}, 2^{-m_0}, 1, 1, 1, \underbrace{2, \cdots, 2}_{m_1}, 2^{-m_1}, 1, 1, 1, 1, \]
\[\underbrace{2, \cdots, 2}_{m_2}, 2^{-m_2}, \cdots\big).
\]
The set $A_1=\{n: \prod_{i=1}^n w_i>1\}$ has arbitrarily large gaps, hence $B_w$ is not an $\underline{\mathcal{BD}}$-operator by Proposition \ref{weighted.shifts.F.oper}. On the other hand, we have for every $M>1$
\[
\underline{d}(A_M)=\underline{d}\big(\{n: \prod_{i=1}^n w_i>M \}\big)=1.
\]
Hence, $B_w$ is $\underline{\mathcal{D}}_1$-operator by Proposition \ref{weighted.shifts.F.oper}.
\end{proof}

  Mixing operators obviously are $\underline{\mathcal{BD}}_1$-operators, but the converse is false, this is the argument of the next result.

 \begin{proposition}
 There exists a $\underline{\mathcal{BD}}_1$-operator on $c_0(\Z_+)$ which is not mixing.
 \end{proposition}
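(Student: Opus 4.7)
The plan is to apply Proposition~\ref{weighted.shifts.F.oper} to a carefully chosen unilateral weighted backward shift on $c_0(\Z_+)$. Since $\underline{\mathcal{BD}}_1$ is a shift-invariant filter (as recorded in Section~\ref{return}), a unilateral shift $B_w$ is a $\underline{\mathcal{BD}}_1$-operator exactly when $A_M := \{n \ge 1 : P_n > M\} \in \underline{\mathcal{BD}}_1$ for every $M > 0$, where $P_n := \prod_{i=1}^n |w_i|$; while the same proposition applied with $\F = \I^*$ shows that $B_w$ is mixing precisely when $P_n \to \infty$. So the task reduces to building a weight whose partial products fail to tend to $\infty$, yet whose level sets $\{n : P_n \le M\}$ nonetheless have upper Banach density zero for every $M > 0$.

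To this end I fix the block lengths $L_k := 2^k$, set $N_0 := 0$ and $N_{k+1} := N_k + L_k$, and define
\[
w_i := \begin{cases} 2, & N_k < i < N_{k+1}, \\ 2^{-(L_k - 1)}, & i = N_{k+1}, \end{cases}
\]
for each $k \in \Z_+$. Then $|w_i| \le 2$, so $B_w$ acts continuously on $c_0(\Z_+)$. A one-line induction on $k$ gives $P_{N_k} = 1$ and $P_n = 2^{n - N_k}$ for $N_k < n < N_{k+1}$. In particular $P_{N_k} = 1$ along the subsequence $(N_k)_k$, hence $P_n \not\to \infty$ and $B_w$ is not mixing.

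It remains to check $\underline{Bd}(A_M) = 1$ for every $M > 0$, equivalently $\overline{Bd}(\Z_+ \setminus A_M) = 0$. Setting $c_M := \lceil \log_2 M \rceil$, within each block $(N_k, N_{k+1}]$ the ``bad'' set $\{n : P_n \le M\}$ is contained in $\{N_k + 1, \dots, N_k + c_M\} \cup \{N_{k+1}\}$ and therefore has at most $c_M + 1$ elements. Because $L_k \to \infty$, for any fixed window length $s$ and all $a$ sufficiently large the interval $[a+1, a+s]$ meets at most two consecutive blocks and so contains at most $2(c_M + 1)$ indices outside $A_M$. Thus
\[
\overline{Bd}(\Z_+ \setminus A_M) \;\le\; \lim_{s \to \infty} \frac{2(c_M+1)}{s} \;=\; 0,
\]
as desired. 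The one delicate point, and the reason for letting $L_k \to \infty$ rather than merely ensuring that the drop points are rare in the asymptotic-density sense, is precisely that a fixed-length window must eventually be swamped by the block lengths so it meets only $O(1)$ blocks; this is what upgrades negligibility from the asymptotic-density level to the Banach-density level.
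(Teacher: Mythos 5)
Your proof is correct and follows essentially the same route as the paper's: a unilateral weighted shift built from blocks of $2$'s followed by a single compensating weight, reduced via Proposition~\ref{weighted.shifts.F.oper} to showing $\underline{Bd}(A_M)=1$ for all $M>0$ while $\prod_{i=1}^n w_i\not\to\infty$. The only difference is cosmetic: the paper takes linearly growing blocks and estimates $\liminf_k|A_M\cap[k,k+s]|$ by a direct block count, whereas your exponentially growing blocks together with the complementary bound $\overline{Bd}(\Z_+\setminus A_M)=0$ (a fixed-length window eventually meets at most two blocks) verify the same fact a bit more cleanly.
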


 \begin{proof}
 Consider the weight $w=(w_n)_{n=1}^\infty$ defined by
 \[
  w=(1, 2,2^{-1},2,2,2^{-2}, \dots ,\underbrace{2, \dots, 2}_{n}, 2^{-n}, \dots).
 \]

The weighted shift $B_w$ is not mixing since $\prod_{i=1}^nw_i$ does not tend to infinity as $n$ tends to infinity (see, e.g., Chapter 4 in \cite{GrPe}). It remains to show that $\underline{Bd}(A_M)=1$ for every $M\ge 1$. Let $M> 1$ and $n\in \N$  such that $2^{n-1}<M\leq 2^{n}$. If $k>n(n+1)/2$ and
 $s\geq (n+1)+(n+2)+\dots +2n=n(3n+1)/2$, then there is $l_s>1$ such that $(l_s-1)n((l_s+1)n+1)/2\leq s<(l_s)n((l_s+2)n+1)/2$. An easy computation shows that we have $|A_M\cap [k, k+s]|\geq s- l_s(n^2+n)>(l_s^2/2-l_s-1)n^2-l_sn$. Therefore,
\[
\alpha_s:=\liminf_{k\to \infty}|A_M\cap [k, k+s]|\geq (l_s^2/2-l_s-1)n^2-l_sn,
\]
and thus
\[
\underline{Bd}(A_M)=\lim_{s\to \infty}\frac{\alpha_s}{s}\geq \lim_{s\to \infty}\frac{(l_s^2/2-l_s-1)n^2-l_sn}{(l_s^2/2+l_s)n^2+l_sn}=1.
\]
We conclude by Proposition~\ref{weighted.shifts.F.oper}.
\end{proof}

  % PROPOSITION
 \begin{proposition}
 \label{green}
There exists a $\underline{\mathcal{BD}}$-operator on $\ell^1(\Z_+)$ which is not a $\overline{\mathcal{D}}_1$-operator.
 \end{proposition}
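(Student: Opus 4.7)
The plan is to construct a weighted shift $B_w$ on $\ell^1(\Z_+)$ with bounded weights such that each set $A_M$ is syndetic, while $\overline{d}(A_1)<1$. Both $\underline{\mathcal{BD}}=\mathcal{S}$ and $\overline{\mathcal{D}}_1$ are shift-invariant (in particular shift$_-$-invariant), so Proposition~\ref{weighted.shifts.F.oper} applies in its simplified form (5): the construction will produce an operator satisfying (5) for $\F=\underline{\mathcal{BD}}$ while failing (5) for $\F=\overline{\mathcal{D}}_1$.

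I would build the weight by concatenating ``excursion'' blocks $E_1, E_2,\dots$, where block $E_k$ consists of $\nu(k)$ weights equal to $2$ followed by $\nu(k)$ weights equal to $1/2$, with $\nu(k):=v_2(k)+1$ (one more than the $2$-adic valuation of $k$). Since $|w_n|\leq 2$, $B_w$ is a bounded operator on $\ell^1(\Z_+)$. Writing $s_n:=\log_2\prod_{i=1}^n|w_i|$, the sequence $s_n$ performs a triangular excursion of height $\nu(k)$ inside block $E_k$ and returns to $0$ at its end; in particular $\Z_+\setminus A_1=\{n:s_n=0\}$ is exactly the set of block endpoints.

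To verify that $A_M$ is syndetic, fix $m\in\N$ with $M\leq 2^m$. A block $E_k$ meets $A_M$ precisely when $\nu(k)>m$, i.e.\ when $2^m\mid k$, so the ``high'' blocks are exactly those of index divisible by $2^m$. Between two consecutive high blocks one passes through $2^m-1$ intermediate blocks, each of length at most $2m$, plus a descending tail of length $m+1$ in the first high block and an ascending head of length $m$ in the next. Hence the gaps of $A_M$ are uniformly bounded by $(2m+1)+2m(2^m-1)$.

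Finally, if $T_K:=\sum_{k=1}^K 2\nu(k)$, then $[1,T_K]$ contains exactly $K$ block endpoints. Legendre's formula gives $\sum_{k=1}^K v_2(k)=K-s_2(K)$ where $s_2(K)$ is the binary digit sum, so $T_K=4K-2s_2(K)=4K+O(\log K)$. Interpolating to arbitrary $N$ (using $\nu(K+1)=O(\log K)$ to bound the contribution of the incomplete block) gives $\underline{d}(\Z_+\setminus A_1)=\frac{1}{4}$, whence $\overline{d}(A_1)=\frac{3}{4}<1$. By Proposition~\ref{weighted.shifts.F.oper}, $B_w$ is then a $\underline{\mathcal{BD}}$-operator but not a $\overline{\mathcal{D}}_1$-operator. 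The only delicate step is the density estimate: one must verify that the rare long excursions (whose lengths $2\nu(k)$ grow like $\log k$) do not disturb the limit, which follows since $\nu(K+1)=o(T_K/K)$.
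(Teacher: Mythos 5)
Your construction is essentially the paper's: the authors also build a unilateral weighted shift on $\ell^1(\Z_+)$ whose partial products perform excursions of heights following the ruler sequence (via the recursive blocks $\B_n=[\B_{n-1},\A_n,\B_{n-1}]$, which is exactly your $\nu(k)=v_2(k)+1$ pattern), then check that every $A_M$ is syndetic while $\overline{d}(A_1)<1$ (they get $2/3$ where your symmetric blocks give $3/4$). Your argument is correct and only differs in bookkeeping — explicit $2$-adic valuation and Legendre's formula instead of the recursive block-length count.
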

 \begin{proof}
  Let $\A_n=[\underbrace{2, \dots, 2}_{n-times}, 2^{-n}]$, $\B_1=\A_1$, $\B_n=[\B_{n-1}, \A_n, \B_{n-1}]$, and consider the weight sequence
  \[
  w=(\underbrace{\underbrace{\A_1, \A_2, \A_1}, \A_3, \underbrace{\A_1, \A_2, \A_1}}, \A_4, \underbrace{\underbrace{\A_1, \A_2, \A_1}, \A_3, \underbrace{\A_1, \A_2, \A_1}}, \dots).
  \]
 Since $A_M$ has bounded gaps for every $M>0$, we have from Corollary~\ref{teuws} that $B_w$ is topologically ergodic, i.e., it is a $\underline{\mathcal{BD}}$-operator.

In view of Proposition~\ref{weighted.shifts.F.oper}, it now suffices to show that
   \[
   \overline{d}\Big(\{k\in \N:\prod_{i=1}^k |w_i|>1\}\Big)<1.
   \]
We first notice  that
\[
|\B_n|=3\cdot2^n-n-3 \ \mbox{ and } \ \beta_n:=\Big|\big\{k\leq |\B_n|: \prod_{i=1}^k |w_i|=1\big\}\Big|=2^n-1.
\]
Now we observe that $\prod_{i=1}^k |w_i| \geq 1$ for all $k\in\N$.
Therefore, we have
\begin{align*}
\overline{d}\Big(\Big\{k\in \N: \prod_{i=1}^k |w_i|>1\Big\}\Big)&=\limsup_n\frac{\Big|\big\{k\in [1, n]: \prod_{i=1}^k |w_i|>1\big\}\Big|}{n}\\
&=\limsup_n\frac{\Big|\big\{k\leq |\B_n|+n+1: \prod_{i=1}^k |w_i|>1\big\}\Big|}{|\B_n|+n+1}\\
&=\lim_n \frac{|\B_n|-\beta_n+n+2}{|\B_n|+n+1}=\lim_n \frac{2 \cdot 2^n }{3\cdot 2^n-2}=\frac{2}{3}<1.
\end{align*}
 \end{proof}

 Figure \ref{fig:fig1} below summarizes the results of this section. We remark that:
 
  \begin{itemize}
  \item by Proposition \ref{yellow} (1), there exists a $\overline{\mathcal{BD}}_1$-operator which is not a $\overline{\mathcal{D}}_1$-operator and a $\overline{\mathcal{BD}}$-operator which is not a $ \overline{\mathcal{D}}$-operator;
  \item by Proposition \ref{yellow} (2), there exists a $\overline{\mathcal{D}}_1$-operator which is not a $\underline{\mathcal{D}}_1$-operator and a $\overline{\mathcal{D}}$-operator which is not a $ \underline{\mathcal{D}}$-operator;
  \item by Proposition \ref{yellow} (3), there exists a $\underline{\mathcal{D}}_1$-operator which is not a $\underline{\mathcal{BD}}_1$ and a $\underline{\mathcal{D}}$-operator which is not a $ \underline{\mathcal{BD}}$-operator.

  On the other hand, by Proposition \ref{green}, there exists a
 \item  $\underline{\mathcal{BD}}$-operator which is not a $\underline{\mathcal{BD}}_1$-operator;
 \item $\underline{\mathcal{BD}}$-operator which is not a $\overline{\mathcal{D}}_1$-operator;
 \item  $\underline{\mathcal{D}}$-operator which is not a $\underline{\mathcal{D}}_1$-operator;
 \item  $\underline{\mathcal{D}}$-operator which is not a $\overline{\mathcal{D}}_1$-operator;
 \item  $\overline{\mathcal{D}}$-operator which is not a $\overline{\mathcal{D}}_1$-operator.

  \end{itemize}

\begin{figure}[H]
 \centering
 \includegraphics[width=0.9\textwidth]{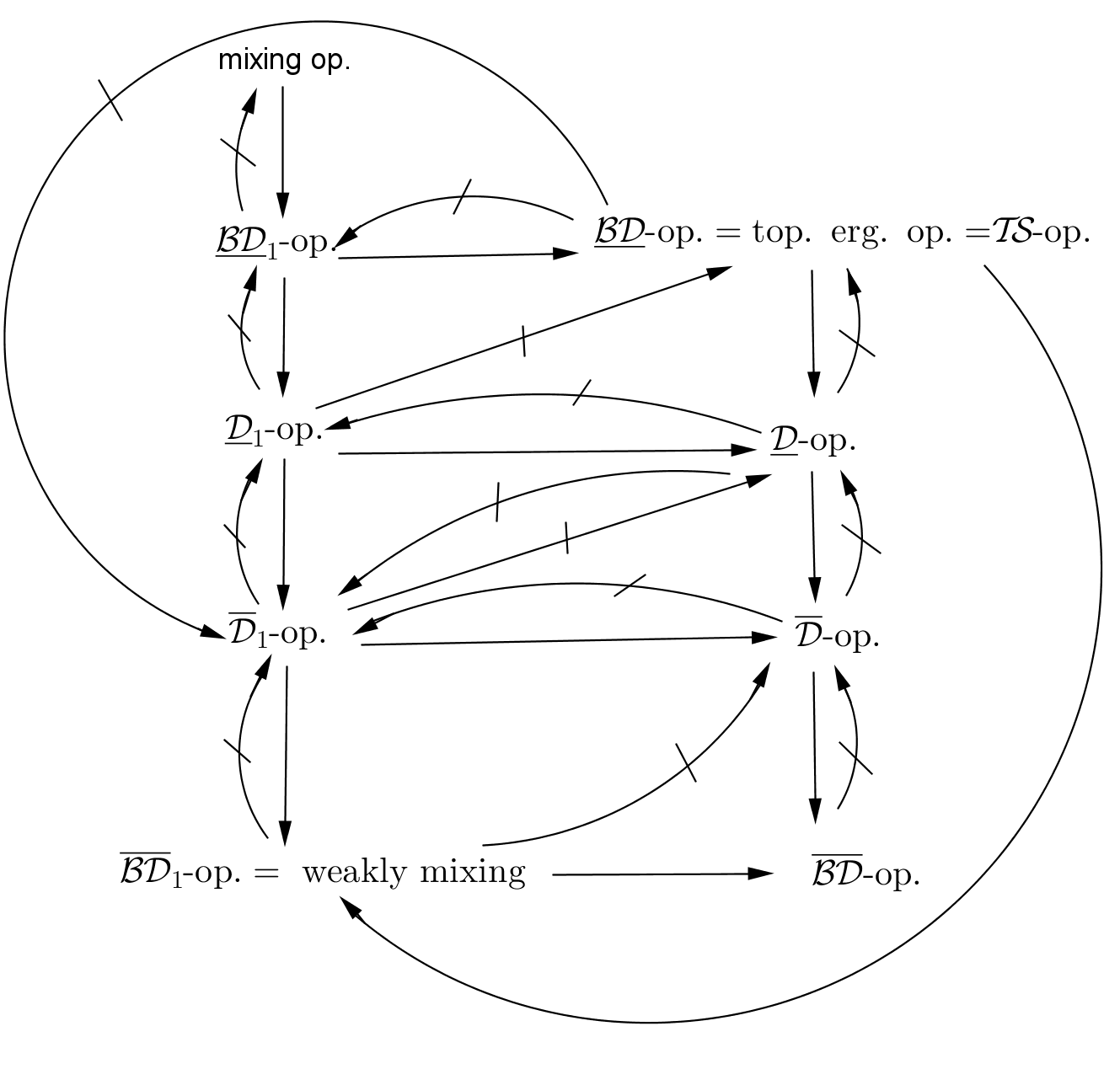}  
 \caption{Densities and transitivity properties}
 \label{fig:fig1}
\end{figure}

% new section
\section{Some special families}
\label{filt}
%\label{RetSetsFilters}

 In this section we study new classes of $\F$-transitive operators given by families commonly used in Ramsey Theory. For a rich source on these families see \cite{HiSt}. For instance, we will consider the families of $\Delta$-sets and of $\IP$-sets, as well as their dual families.

     \[
\begin{aligned}
\Delta&=\{A\subseteq \Z_+: (B-B)\cap \Z_+ \subseteq A, \mbox{ for some infinite subset } B \mbox { of } \Z_+\} \\
\IP &=\{A\subseteq \Z_+: \exists (x_n)_n\subseteq \N \mbox{ with } \sum_{n\in F}x_n\in A, \ \forall F\subset  \Z_+ \mbox{ finite}\}.
\end{aligned}
\]
The families $\Delta^*$ and $\IP^*$ are filters since  $\Delta$ and $\IP$  are partition regular.
In addition, we have
\[
  \I^*\subsetneqq \Delta^* \subsetneqq \IP^* \subsetneqq \Synd
   \]

   \begin{equation}
  \label{hierarchy.sets}
   \I^*\subsetneqq \PS^* \subsetneqq \Synd,
  \end{equation}
  see \cite{BeDo} for details.
In linear dynamics, some of the widely studied classes are the mixing and weakly mixing operators.
 As we already mentioned, an operator $T$ is mixing if and only if it is an $\I^*$-operator and $T$ is weakly mixing if and only if $T$ is  a  $\mathcal{T}$-operator.
We recall that the class  of $\mathcal{TS}$-operators coincides with the class  of topologically ergodic operators by Lemma~\ref{lthick} (see also the exercises  in  \cite[Chapter 2]{GrPe}). Moreover, since $\mathcal{TS}=\mathcal{PS}^*$ and $\mathcal{TS}$ is a filter, we know that $\mathcal{PS}^*$ is partition regular (Lemma~\ref{part.reg}). With the help of Proposition~\ref{Fstar.maps} applied to $\F=\PS$ we can therefore complete the picture.

% PROPOSITION
\begin{proposition} \label{synd.op.is.hered.synd}
 Let $T\in\Lin(X)$, where $X$ is a separable $F$-space. The following are equivalent:
\begin{enumerate}
\item[(1)] $T$ is a topologically ergodic operator;
\item[(2)] $T$ is a hereditarily $\mathcal{TS}$-operator;
\item[(3)] $T$ is a $\mathcal{TS}$-operator;
\item[(4)] $T$ is a hereditarily $\PS$-operator;
\item[(5)]  $hcA:=\{ x\in X : \overline{\{T^nx:n\in A\}}=X\}$ is a dense ($G_\delta$) set in $X$ for any $A\in \PS$.
\end{enumerate}
\end{proposition}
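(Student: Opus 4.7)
The plan is to derive this proposition from Proposition~\ref{Fstar.maps}(A) applied to the partition regular family $\PS$, together with Lemma~\ref{lthick} and the already-recalled fact that topologically ergodic operators are weakly mixing.

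First, since $\mathcal{TS} = \PS^*$ is a filter (as used in the text preceding the statement), Lemma~\ref{part.reg} tells us that $\PS$ is partition regular. Applying Proposition~\ref{Fstar.maps}(A) with $\F = \PS$, so that $\F^* = \mathcal{TS}$, immediately yields the equivalence of our conditions (3), (2), (4), and (5), which correspond respectively to items (1), (2), (3), and (4) there. The Remark following Proposition~\ref{Fstar.maps} confirms that a separable $F$-space is a suitable ambient space.

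It remains to connect these with (1). The implication (3) $\Rightarrow$ (1) is immediate because every thickly syndetic set is syndetic, so $\mathcal{TS} \subset \Synd$ and any $\mathcal{TS}$-operator is automatically topologically ergodic. For the converse (1) $\Rightarrow$ (3), I would invoke the fact recalled in the introduction that every topologically ergodic operator is weakly mixing. Then applying Lemma~\ref{lthick} with $\F = \Synd$ to our (weakly mixing) topologically ergodic $T$ yields that $T$ is a $\widetilde{\Synd}$-operator; since $\widetilde{\Synd} = \mathcal{TS}$ as pointed out in Remark~\ref{remfcrit}(1), (3) follows.

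No step is technically demanding; the main point is really the bookkeeping, namely identifying $\mathcal{TS}$ simultaneously as $\widetilde{\Synd}$ (to apply Lemma~\ref{lthick}) and as $\PS^*$ (to apply Proposition~\ref{Fstar.maps}), so that the two results can be used in tandem.
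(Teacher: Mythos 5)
Your proof is correct and follows essentially the same route as the paper: the equivalence of (2)--(5) with (3) via Proposition~\ref{Fstar.maps}(A) applied to the partition regular family $\PS$ (with $\PS^*=\mathcal{TS}$), and the equivalence of (1) and (3) via Lemma~\ref{lthick} applied to $\F=\Synd$ together with the fact that topologically ergodic operators are weakly mixing and $\widetilde{\Synd}=\mathcal{TS}$. The paper states this argument only in compressed form in the paragraph preceding the proposition, and your write-up is a faithful expansion of it.
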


We will distinguish different classes of $\F$-operators by means of Proposition~\ref{weighted.shifts.F.oper}.
Given a family $\F$, the following are two standard ways to induce  shift-invariant families
  \[
  \F_+:=\bigcup_{k\in\Z}(\F+k)
  \]
  \[
  \F_\bullet:=\bigcap_{k\in\Z} (\F+k),
  \]
where $\F+k:=\{ A\subset \Z_+ : \exists B\in \F \mbox{ with } (B+k)\cap \Z_+\subset A\}$, $k\in\Z$. We have
\[
\Ft\subset \F_\bullet \subset \F \subset \F_+ .
\]
Moreover, for any $A\subseteq \Z_+$ we have
  \begin{equation}
  A\in\left(\F^*\right)_\bullet \mbox{ if and only if } A\in \left(\F_+\right)^*.
  \label{equiv.Fint}
  \end{equation}
It is  well-known that $\Delta$ and $\IP$ are not shift invariant, while $\PS$  is. Also, if
$\F=\Delta, \IP$ or $\PS$ and $\mathscr{G}=\F$ or $\F_+$ then
 \begin{equation}
 \label{equiv.F^*}
 A\in \mathscr{G}^* \mbox{ if and only if }\Z_+\setminus A\notin \mathscr{G},
 \end{equation}
since $\mathscr{G}$ is partition regular.

% PROPOSITION
\begin{proposition}\label{fbullet}
 Every $\mathscr{F}$-operator is  an $\mathscr{F}_{\bullet}$-operator.
\end{proposition}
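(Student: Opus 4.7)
The plan is to show, for an arbitrary $\F$-operator $T$ on $X$, that for every $U,V\in\mathcal{U}(X)$ and every $k\in\Z$ one has $N(U,V)\in\F+k$. Since $\F_\bullet=\bigcap_{k\in\Z}(\F+k)$, this suffices. By the definition of $\F+k$, the task for a fixed $k$ reduces to exhibiting a set $B\in\F$ with $(B+k)\cap\Z_+\subset N(U,V)$. I will also use the routine fact that each iterate $T^m$ has dense range: taking $U=X$, the set $N(X,V)$ lies in $\F$ and is therefore infinite, so it contains some $n\ge m$, which gives $V\cap T^m(X)\supset V\cap T^n(X)\neq\emptyset$ for every non-empty open $V\subset X$.

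For $k\ge 0$ I would take $B:=N(U,T^{-k}(V))$. The set $T^{-k}(V)$ is open by continuity of $T$ and non-empty by the dense-range observation, so $B\in\F$ by $\F$-transitivity. For each $n\in B$ some $u\in U$ satisfies $T^n u\in T^{-k}(V)$, whence $T^{n+k}u\in V$ and $B+k\subset N(U,V)$, as required.

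The delicate case is $k=-m<0$: since $\F$ is not assumed shift-invariant, one cannot simply translate $N(U,V)$ to the right. The idea is to pull back through $T^m$ on the first variable. Take $W:=T^{-m}(U)$, which is open by continuity of $T^m$ and non-empty by the dense-range observation, and set $B:=N(W,V)$, which lies in $\F$ by $\F$-transitivity. For any $n\in B$ with $n\ge m$, a witness $w\in W$ satisfying $T^n w\in V$ yields $u:=T^m w\in U$ with $T^{n-m}u=T^n w\in V$, so $n-m\in N(U,V)$. Hence $(B-m)\cap\Z_+\subset N(U,V)$, i.e.\ $N(U,V)\in\F+(-m)$. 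The main obstacle is exactly this negative-shift case; the key observation is that the dense range of each iterate $T^m$ (a free consequence of $\F$-transitivity) allows one to pre-compose with $T^m$ on the domain side, so that $\F$-transitivity of $T$ on the pair $(W,V)$ delivers the required inclusion. Combining both cases gives $N(U,V)\in\bigcap_{k\in\Z}(\F+k)=\F_\bullet$, completing the proof.
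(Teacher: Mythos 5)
Your proof is correct and takes essentially the same route as the paper's: positive shifts are obtained by replacing $V$ with $T^{-k}(V)$, and negative shifts by replacing $U$ with $T^{-m}(U)$, with the intersection with $\Z_+$ in the definition of $\F+k$ absorbing the indices $n<m$. The paper additionally shrinks $U,V$ to $U',V'$ so that $N(T^{-k}U',V')\subset [k,+\infty[$, a precaution your argument correctly shows is unnecessary, and your explicit dense-range justification for the non-emptiness of the pulled-back open sets is a sound (and implicit in the paper) detail.
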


\begin{proof}
Let $U,V\in \mathcal{U}(X)$ and $k\ge 0$. We have $N(U,T^{-k}V)+k\subset N(U,V)$.  Moreover, since $X$ has no isolated points, by transitivity we can find non-empty open sets $U'\subset U$ and $V'\subset V$ such that $N(T^{-k}U',V')\subset [k,+\infty [$. Thus we have
\[
 N(T^{-k}U',V')\big)-k\subset N(U,V).
 \]
 We can conclude that every $\mathscr{F}$-operator is an  $\mathscr{F}_{\bullet}$-operator.
\end{proof}

We  next compare the notions of mixing operator, $\Delta^*$-operator, $\IP^*$-operator and topologically ergodic operator.

% PROPOSITION
  \begin{proposition}
  \label{proposition.first}
There exists a topologically ergodic weighted backward shift on $X=c_0(\Z_+)$ or $\ell^p(\Z_+)$, $1\leq p< \infty$, which is not an $\IP^*$-operator.
\end{proposition}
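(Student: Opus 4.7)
The plan is to construct an explicit unilateral weighted backward shift $B_w$ on $c_0(\Z_+)$ (the same proof will work on $\ell^p(\Z_+)$) whose weights take only the values $2$ and $1/2$, and then verify both properties via Corollary~\ref{teuws} and Proposition~\ref{weighted.shifts.F.oper} applied with $\F = \IP^*$. I partition $\Z_+$ into blocks $B_1, B_2, \dots$, where $B_k$ has length $2b_k$ with $b_k := 1 + v_2(k)$ and $v_2$ denotes the $2$-adic valuation. Inside each $B_k$ I place $b_k$ consecutive weights equal to $2$ followed by $b_k$ consecutive weights equal to $1/2$. Setting $r_n := \log_2 \prod_{i=1}^n |w_i|$, the sequence $r$ traces a triangle inside $B_k$, rising from $0$ up to a peak of value $b_k$ and descending back to $0$ at the endpoint $S_k := 2\sum_{j \le k} b_j$. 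Using $\sum_{j \le k} v_2(j) = k - s_2(k)$ (where $s_2$ is the binary digit-sum), one obtains the closed form $S_k = 4k - 2 s_2(k)$.

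For topological ergodicity I would apply Corollary~\ref{teuws} and show that $A_M = \{n : \prod_{i=1}^n |w_i| > M\}$ is syndetic for every $M > 0$. Setting $M^* := \lfloor \log_2 M \rfloor + 1$, a block $B_k$ contributes to $A_M$ exactly when $b_k \ge M^*$, i.e.\ when $2^{M^*-1}$ divides $k$, and within each such block $A_M \cap B_k$ is the contiguous interval $[S_{k-1}+M^*,\,S_k-M^*]$. Between two consecutive contributing blocks $B_k$ and $B_{k+2^{M^*-1}}$ there are exactly $2^{M^*-1}-1$ non-contributing blocks, whose combined length $\sum_{j=k+1}^{k+2^{M^*-1}-1} 2b_j$ is bounded by a constant depending only on $M$ (again via $\sum v_2(j) = N - s_2(N)$). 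Hence consecutive elements of $A_M$ lie at bounded distance, so $A_M$ is syndetic.

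The heart of the proof is showing that $B_w$ is not an $\IP^*$-operator; by Proposition~\ref{weighted.shifts.F.oper} it suffices to find a single pair $(j,M)$ with $A_{M,j} \notin \IP^*$, equivalently an $\IP$-set disjoint from $A_{M,j}$. I take $j=0$, $M=1$ and claim that $E := \{S_k : k \ge 1\}$ is itself an $\IP$-set disjoint from $A_1$. Disjointness is immediate since $r_{S_k}=0$ forces $\prod_{i=1}^{S_k}|w_i|=1 \not> 1$. To exhibit the $\IP$-structure, set $x_n := 2^{n+1}-2 = S_{2^{n-1}}$ and observe that for any finite non-empty $F \subseteq \N$ with $k := \sum_{n \in F} 2^{n-1}$, the binary supports of the summands $2^{n-1}$ are pairwise disjoint, so $s_2(k)=|F|$, and hence
\[
\sum_{n \in F} x_n \;=\; 4 \sum_{n \in F} 2^{n-1} - 2|F| \;=\; 4k - 2 s_2(k) \;=\; S_k.
\]
As $F$ ranges over finite non-empty subsets of $\N$, $k$ ranges over all positive integers, so $E$ is exactly the $\mathrm{FS}$-set of $(x_n)_n$ and is therefore an $\IP$-set.

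The main obstacle I expect is the tension between the two requirements: topological ergodicity forces $r_n$ to attain arbitrarily large values \emph{syndetically}, while failure of the $\IP^*$ property demands that an $\IP$-set of positions remain entirely inside $\{r_n \le 0\}$. The elegance of the choice $b_k = 1+v_2(k)$ is that it resolves both at once: the $v_2$-structure places peaks of every height syndetically throughout the sequence, guaranteeing syndeticity of each $A_M$, while the additivity of $v_2$ on integers with pairwise disjoint binary supports forces the endpoint sequence $\{S_k\}$ to carry a genuine $\mathrm{FS}$-structure and thus supply the required $\IP$-set.
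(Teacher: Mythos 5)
Your proof is correct, but the construction is genuinely different from the paper's. The paper starts from the explicit $\IP$-set $B=\{\sum_{n\in F}2^{2n}: F\ \mbox{finite}\}$ and rigs the weights so that $A_1=\Z_+\setminus B$ exactly (a single compensating weight $2^{-(b_{k}-b_{k-1}-1)}$ at each $b_k\in B$, weight $2$ elsewhere); non-$\IP^*$-ness is then immediate, and topological ergodicity is obtained abstractly: since $B\notin\PS$ and $\PS$ is shift-invariant, each $A_{2^j}=\bigcap_{i=0}^{j}(\Z_+\setminus(B+i))$ lies in the filter $\PS^*=\mathcal{TS}$, so $B_w$ is a $\PS^*$-operator, i.e.\ topologically ergodic. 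You instead build a ``triangular wave'' of heights $b_k=1+v_2(k)$ and verify both properties by hand: syndeticity of every $A_M$ follows from the ruler-sequence placement of high peaks (the key point, which you use correctly, being that the blocks strictly between consecutive multiples of $2^{M^*-1}$ all have $v_2<M^*-1$, so their total length is bounded in terms of $M$ alone), while the $\IP$-structure of the zero set $\{S_k\}$ falls out of the identity $S_k=4k-2s_2(k)$ and the additivity of $s_2$ on integers with disjoint binary supports. Both routes invoke Proposition~\ref{weighted.shifts.F.oper} (with $j=0$, exactly as the paper itself does) to pass from $A_1\cap E=\emptyset$ with $E\in\IP$ to the failure of $\IP^*$-transitivity. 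What the paper's approach buys is brevity and a reusable template (choose any $\IP$-set that is not piecewise syndetic); what yours buys is a fully self-contained, quantitative verification of ergodicity that does not rely on the identification $\mathcal{TS}=\PS^*$ or on the filter property of $\PS^*$, at the cost of the arithmetic with $s_2$ and $v_2$. All the computations I checked ($S_k=4k-2s_2(k)$, the interval $A_M\cap B_k=[S_{k-1}+M^*,S_k-M^*]$, and $\sum_{n\in F}x_n=S_k$) are correct.
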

\begin{proof}

  Consider the set
   \[
   B=\Big\{\sum_{n\in F} 2^{2n}: F \mbox{ finite set of } \N\Big\}.
   \]
   Clearly $B\in \IP$ and thus $\Z_+\setminus B\notin \IP^*$ by (\ref{equiv.F^*}). Let $(b_n)$ be the increasing enumeration of $B$. We define the weight $w=(w_m)_{m=1}^\infty$ as follows

   % EQUATION   \label{formula}
    \begin{equation}
      \label{formula}
  w=(2,\dots, 2, \underbrace{\frac{1}{2^{b_1-1}}}_{w_{b_1}}, 2, \dots, 2, \underbrace{\frac{1}{2^{b_2-b_1-1}}}_{w_{b_2}}, 2, \dots, 2, \underbrace{\frac{1}{2^{b_3-b_2-1}}}_{w_{b_3}}, 2, \dots ).
  \end{equation}
  Now, $A_1:=\{n\geq 1: \prod_{i=1}^n w_i>1\}=\Z_+\setminus B$, hence $B_w$ is not an $\IP^*$-operator by Propositon~\ref{weighted.shifts.F.oper}. On the other hand, it is easy to see that $B\notin \PS$. Then $(B+i)\notin \PS$ for every $i\geq 0$, since $\PS$ is shift invariant. Hence, by (\ref{equiv.F^*}) the set $\Z_+\setminus (B+i)\in \PS^*$ for every $i\geq 0$. Now observe that $A_{2^j}:=\{n\geq 1: \prod_{i=1}^n w_i>2^j\}=\Z_+\setminus \Big(\bigcup_{i=0}^jB+i\Big)=\bigcap_{i=0}^j(\Z_+\setminus (B+i))\in \PS^*$, since $\PS^*$ is a filter. Hence $B_w$ is a $\PS^*$-operator, or equivalently a topologically ergodic operator, by Proposition~\ref{weighted.shifts.F.oper}.
   \end{proof}

%%%%%%%%%%%%%%%%%%%%%%%%%%%%%%%%%%%%%%%

\begin{proposition}
\label{proposition.second}
There exists a weighted backward shift operator on $X=c_0(\Z_+)$ or $\ell^p(\Z_+)$, $1\leq p< \infty$, which is an $\IP^*$-operator but not a $\Delta^*$-operator.
\end{proposition}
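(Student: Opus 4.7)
The plan is to mimic the construction of Proposition~\ref{proposition.first}, replacing the $\IP$-set $B$ used there by a carefully chosen $\Delta$-set $D$ that is ``robustly non-$\IP$'' under bounded shifts and thickenings.

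First, choose an infinite set $C = \{c_n\}_{n \ge 1} \subset \Z_+$ of very rapidly growing integers, say with $c_{n+1} \ge 2^{c_n}$, and set $D := (C - C) \cap \Z_+$. Then $D \in \Delta$ by definition. The heart of the argument is the combinatorial claim that for every bounded integer interval $I \subset \Z$ the set $(D + I) \cap \Z_+$ is not an $\IP$-set. Assuming a sequence $(x_n) \subset \Z_+$ has all finite sums $\sum_{i\in F} x_i$ in $(D + I) \cap \Z_+$, one writes each such sum as $c_{a(F)} - c_{b(F)} + r(F)$ with $r(F)$ in a bounded range, and uses pigeonhole to pass to a subsequence on which these residues and certain indices stabilize. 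The super-exponential growth of $C$ then forces the ``leading'' $c$-term on each side of every resulting identity to match, yielding rigid algebraic constraints that admit only finitely many consistent solutions---contradicting the infinitude of the sequence. This is the main technical hurdle.

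Next, enumerate $D = \{d_1 < d_2 < \dots\}$, put $d_0 := 0$, and define $w = (w_m)_{m \ge 1}$ by the same formula~(\ref{formula}) as in Proposition~\ref{proposition.first} but with $D$ in place of $B$: $w_{d_n} := 2^{-(d_n - d_{n-1} - 1)}$ and $w_m := 2$ for $m \notin D$. A direct calculation gives $\prod_{i=1}^k w_i = 2^{k - d_{\phi(k)}}$, where $\phi(k) := \max\{n \ge 0 : d_n \le k\}$. Hence for each $M > 0$ and $j \ge 0$, the complement $\Z_+ \setminus A_{M, j}$ coincides (up to a finite correction) with a set of the form $((D + [0, J]) - j) \cap \Z_+$ for a suitable $J = J(M, j) \ge 0$. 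By the combinatorial claim above, this complement is not an $\IP$-set, whence $A_{M, j} \in \IP^*$ by equation~(\ref{equiv.F^*}); Proposition~\ref{weighted.shifts.F.oper} then yields that $B_w$ is an $\IP^*$-operator. On the other hand, $\Z_+ \setminus A_1 = D \in \Delta$, so (\ref{equiv.F^*}) gives $A_1 \notin \Delta^*$, and Proposition~\ref{weighted.shifts.F.oper} concludes that $B_w$ is not a $\Delta^*$-operator.

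I expect the main obstacle to be the combinatorial claim above. In particular, because $\IP^*$ is \emph{not} shift$_-$-invariant, one cannot appeal to the simpler condition~(5) of Proposition~\ref{weighted.shifts.F.oper}; condition~(3) must be verified directly for every shift parameter $j \in \N$. This forces one to control the non-$\IP$-ness of $D$ under both left-shifts and thickenings simultaneously, which is noticeably more delicate than the corresponding step in Proposition~\ref{proposition.first}, where the shift-invariance of $\PS^*$ was used freely.
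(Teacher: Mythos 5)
Your plan is sound and arrives at the statement by a genuinely different route from the paper's. The paper keeps the set abstract: it reduces everything to producing a set $E\in\big(\IP^*\big)_\bullet$ that fails to be a $\Delta^*$-set, and obtains such an $E$ by invoking (a modification of) Bergelson--Downarowicz \cite[Theorem 2.11(1)]{BeDo}, handling the translates $\bigcup_{i\in F}(B+i)$ via partition regularity of $\IP$ together with (\ref{equiv.F^*}) and (\ref{equiv.Fint}). You instead exhibit the set explicitly as $D=(C-C)\cap\Z_+$ for a super-exponentially growing $C$ and propose to prove the required robustness, namely $(D+I)\cap\Z_+\notin\IP$ for every bounded interval $I$, by hand; this does subsume the finite unions of translates, since $\bigcup_{i\in F}(D+i)\subset D+[\min F,\max F]$. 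What you gain is self-containedness (you are in effect reconstructing the Bergelson--Downarowicz example); what it costs is that the entire combinatorial burden of the proposition now rests on your unproved claim. The shift-theoretic part of your argument --- the formula $\prod_{i=1}^k w_i=2^{\,k-d_{\phi(k)}}$, the identification of $\Z_+\setminus A_{M,j}$ with $(D+[-j,J-j])\cap\Z_+$ up to finite sets, and the correct insistence on verifying condition (3) of Proposition~\ref{weighted.shifts.F.oper} rather than (5) because $\IP^*$ is not shift$_-$-invariant --- matches the paper and is correct.

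The combinatorial claim is true, but your sketch stops just short of the actual contradiction, so it should be recorded. After passing to block sums one may assume all generators $x_i$ are large; writing $x_i=c_{a_i}-c_{b_i}+r_i$ with $r_i$ bounded, the dominant-term rigidity you allude to (cancel the largest surviving index first, using that net coefficients are bounded by the number of summands and that $c_{n+1}\ge 2^{c_n}$ swamps any bounded multiple of $\sum_{k\le n}c_k$) forces two things: the top indices $a_1,a_2,\dots$ are pairwise distinct, and for every pair $i\ne j$ with $a_i<a_j$ one must have the \emph{exact} telescoping $b_j=a_i$, only the bottom endpoint of the lowest ``index interval'' retaining any slack from the bounded residues. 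Three generators then yield $b_2=a_1$, $b_3=a_1$ and $b_3=a_2$ simultaneously, contradicting $a_1\ne a_2$. Until this pair/triple analysis is written out, what you have is a correct and workable plan rather than a complete proof; the paper avoids this work entirely by outsourcing it to \cite{BeDo}.
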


\begin{proof}
Let $B$ be an infinite subset of $\N$ with unbounded gaps and let $(b_n)_n$ be an increasing enumeration of $B$. So there exists an increasing sequence $(n_k)$ such that
 % equation
 \begin{equation}
 \label{cond.inicial}
 b_{n_k+1}-b_{n_k}\to \infty.
 \end{equation}
  Consider the weight sequence $w=(w_m)_{m=1}^\infty$ given by (\ref{formula}). As before $\{n \geq 1: \prod_{i=1}^n w_i>1\}=\Z_+\setminus B$, so it would be desirable that $B\in \Delta$ and thus that $\Z_+\setminus B\notin \Delta^*$ since this would imply that $B_w$ is not a $\Delta^*$-operator.

On the other hand, it can be verified that for every $M>0$ and $j\in \N$ there exists a finite subset $F$ of $\Z$ such that $A_{M, j}=\Z_+\setminus (\cup_{i\in F}B+i)$. Hence, in order to conclude that $B_w$ is an $\IP^*$-operator, by Proposition~\ref{weighted.shifts.F.oper} and condition (\ref{equiv.F^*}) we need to verify

  % equation
  \begin{equation}
  \label{eq.union.IP}
  \bigcup_{i\in F} (B+i)\notin \IP
  \end{equation}
  for any finite subset $F$ of $\Z$. Now, since $\IP$ is partition regular, condition (\ref{eq.union.IP}) is obtained if $B\notin \IP_+$  and this in turn is equivalent to $\Z_+\setminus B\in \big(\IP^*\big)_\bullet$ by (\ref{equiv.F^*}) and (\ref{equiv.Fint}). Now, an obvious modification in the proof of \cite[Theorem 2.11
(1)]{BeDo} ensures the existence of a set $E\in \big(\IP^*\big)_\bullet$ which is not $\bigcup_{n\in \Z_+}\big(\Delta^*+n\big)$-set in $\N$, hence not $\Delta^*$-set. In addition, $\Z_+\setminus E$ has unbounded gaps. Setting $B=\Z_+\setminus E$ we are done.
 \end{proof}

Evidently, every mixing operator is a $\Delta^*$-operator but the converse is not true.

\begin{proposition}
\label{proposition.quasimixing}
There exists a $ \Delta^* $-weighted backward shift on $c_0(\Z_+)$ or $\ell^p(\Z_+)$, $1\leq p< \infty$, which is not mixing.
 \end{proposition}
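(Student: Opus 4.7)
The plan is to mimic the approach of Proposition~\ref{proposition.second}, engineering the construction so that the resulting weighted shift is a $\Delta^*$-operator rather than merely an $\IP^*$-operator, while still failing to be mixing.

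First, I would secure an infinite set $B\subseteq\N$ with the Ramsey avoidance property $B\notin\Delta_+$; equivalently, by (\ref{equiv.Fint}) and (\ref{equiv.F^*}), $\Z_+\setminus B\in(\Delta^*)_\bullet$. Existence of such a set (indeed with $B$ of unbounded gaps) can be obtained by choosing $B$ sufficiently sparse that no shift $B-k$ (for $k\in\Z$) contains $(C-C)\cap\Z_+$ for any infinite $C\subseteq\Z_+$ --- for instance by taking the elements of $B$ to grow super-exponentially so that three-term ``sum closure'' relations on shifted elements are ruled out --- or alternatively by a routine modification of the construction in \cite[Theorem 2.11]{BeDo}.

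Second, using the enumeration $b_1<b_2<\cdots$ of $B$ (with $b_0:=0$), I would define the weight $w=(w_m)_{m\ge 1}$ via formula (\ref{formula}): set $w_m=2$ for $m\notin B$ and $w_{b_k}=2^{-(b_k-b_{k-1}-1)}$. Because $|w_m|\le 2$, $B_w$ is a bounded operator on $c_0(\Z_+)$ or $\ell^p(\Z_+)$. A direct computation gives $\prod_{i=1}^{b_k}w_i=1$ for every $k$ and $\prod_{i=1}^n w_i>1$ for $n\notin B$, so $A_1=\Z_+\setminus B$. Since $B$ is infinite, $A_1$ is not cofinite, proving that $B_w$ is not mixing.

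Third, I would repeat the analysis from the proof of Proposition~\ref{proposition.second}: for each $M>0$ and $j\in\N$, the complement $\Z_+\setminus A_{M,j}$ is, modulo a finite initial segment, contained in a finite union $\bigcup_{i\in F}(B+i)$ for a finite set $F\subset\Z$ depending only on $M$ and $j$. Because $B\notin\Delta_+$ and $\Delta$ is partition regular (so $\Delta^*$ is a filter), such a finite union cannot be a $\Delta$-set, hence by (\ref{equiv.F^*}) its complement $A_{M,j}$ belongs to $\Delta^*$. Proposition~\ref{weighted.shifts.F.oper} then yields that $B_w$ is a $\Delta^*$-operator. The main obstacle is precisely the first step: producing an infinite $B\subseteq\N$ whose every finite collection of integer shifts fails to be a $\Delta$-set. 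Once such a $B$ is available, the remaining computations are essentially identical to those in Propositions~\ref{proposition.first} and \ref{proposition.second}.
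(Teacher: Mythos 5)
Your proposal is correct and follows essentially the same route as the paper: the same weight formula (\ref{formula}) built from a sparse set $B$, the same reduction via Proposition~\ref{weighted.shifts.F.oper} to showing $\bigcup_{i\in F}(B+i)\notin\Delta$ for every finite $F\subset\Z$, and the same observation that $A_1=\Z_+\setminus B$ is not cofinite, so $B_w$ is not mixing. The only place you leave things at the level of a sketch is exactly where the paper does its work: it takes the concrete set $B=\{b_i\}$ with $b_{i+1}=b_i+i+2$ and verifies directly that a finite union of shifts of $B$ cannot contain a difference set $\Delta\bigl((d_m)_m\bigr)$, because any fixed distance $\abs{d_{j_2}-d_{j_1}}>\max_{a,b\in F}\abs{a-b}$ would have to be realized infinitely often between elements of $\bigcup_{i\in F}(B+i)$, which is impossible once the gaps of $B$ tend to infinity --- this is the rigorous form of your ``sum-closure'' remark, and it shows that gaps tending to infinity already suffice (super-exponential growth, or an appeal to \cite{BeDo}, is not needed).
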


 \begin{proof}

Let $B=\{b_i: b_1=2, b_{i+1}=b_i+i+2, i\in \N\}$. Consider the weight sequence $w=(w_m)_{m=1}^\infty$ given by (\ref{formula}), so we have

  \[
  w=\big(2, 2^{-1}, 2, 2, 2^{-2}, 2, 2, 2, 2^{-3}, \dots \big).
  \]
  We know that $B_w$ is not mixing since $\prod_{i=1}^n w_i$ does not tend to infinity as $n$ tends to infinity. On the other hand, it can be verified that for every $M>0$ and $j\in \N$ there exists a finite subset $F$ of $\Z$ such that $A_{M, j}=\Z_+\setminus (\cup_{i\in F}B+i)$. Hence, in order to conclude that $B_w$ is a $\Delta^*$-operator, by Proposition~\ref{weighted.shifts.F.oper} and condition (\ref{equiv.F^*}) we need to verify $\bigcup_{i\in F} B+i\notin \Delta$, for every finite subset $F$ of $\Z$.

So, let $F$ be a finite subset of $\Z$ with $N=\max_{a,b\in F}\abs{a-b}$. Suppose that $\bigcup_{i\in F} B+i$ is a $\Delta$-set. Then, there exists an increasing sequence $(d_m)_m$ such that $\bigcup_{i\in F} B+i=\Delta\big((d_m)_m\big)$, where $\Delta\big((d_m)_m\big)$ denotes the \emph{set of differences of $(d_m)_m$} defined  by $\Delta\big((d_m)_m\big)=\{d_j-d_i: 1\leq i<j\}$. Fix $d_{j_1}, d_{j_2}(j_1<j_2)$ such that $\abs{d_{j_2}-d_{j_1}}>N$. Then for each $m\in\N$ we have
   \[
   \abs{d_{j_2}-d_{j_1}}=\abs{(d_{j_m}-d_{j_1})-(d_{j_m}-d_{j_2})},  % \qquad \forall m\in \N
   \]
   which means that the distance $\abs{d_{j_2}-d_{j_1}}$ between elements of $\bigcup_{i\in F} B+i$ is attained infinitely many times, which is not the case taking into account the way in which $B$ was defined. We thus conclude that $\bigcup_{i\in F} B+i\notin \Delta$.
   \end{proof}

%%%%%%%%%%%%%%%%%%%%%%%%%%%%%%%%%%%%%%%

 % SUBSECTION
\subsection{Connection with $\A$-hypercyclicity}\label{ahyp}

 In this subsection we investigate the connection between the classes of hypercyclic operators considered throughout this work and the notion of $\A$-hypercyclicity studied in \cite{BMPP}.

Given a family $\A$ on $\Z_+$, an operator  $T\in \Lin(X)$ is called \emph{$\A$-hypercyclic} if there exists $x\in X$ such that  $N(x, V)\in \A$ for each $V$ in $\mathcal{U}(X)$. Such a vector $x$ is called an \emph{$\A$-hypercyclic vector} for $T$.

 When $\A=\underline{\mathcal{D}}$, the operator $T$ is called \emph{frequently hypercyclic}. This class was introduced by Bayart and Grivaux in \cite{BG1}, \cite{BG}.
When $\A=\overline{\mathcal{D}}$, the operator $T$ is called \emph{$\mathfrak{U}$-frequently hypercyclic}; this class was introduced by Shkarin \cite{Sh1}.
When $\A=\overline{\mathcal{BD}}$, the operator $T$ is called \emph{reiteratively hypercyclic} \cite{P05} (see a detailed study in \cite{BMPP}).

 The frequently hypercyclic operators constitute by far the most extensively studied class of operators amongst the three classes mentioned above. Clearly any frequently hypercyclic operator is an $\mathfrak{U}$-frequently hypercyclic operator, which in turn is reiteratively hypercyclic.
The hierarchy between frequently hypercyclic and $\mathfrak{U}$-frequently hypercyclic operators as well as a full characterization for weighted shift operators have been established by Bayart and Ruzsa \cite{BaRu}. A complementary study of this kind, taking into account reiterative hypercyclicity can be found in~\cite{BMPP}.

In particular, we already know that there exists a mixing weighted shift which is not reiteratively hypercyclic as shown in \cite{BMPP}. On the other hand, there exists a frequently hypercyclic (hence reiteratively hypercyclic) operator which is not mixing, see \cite{BaGr}. Reiteratively hypercyclic operators are topologically ergodic \cite{BMPP,GrPe05}. One can therefore wonder whether any reiteratively hypercyclic operator is a $\Delta^*$-operator or an $\IP^*$-operator.

     % PROPOSITION
  \begin{proposition}
  \label{propP.implies.shift.DeltaStar}
     Let $T\in\mathcal{L}(X)$ be a reiteratively hypercyclic operator. Then
   \[
  N(U, V)\in \bigcap_{t\in N(U, V)}\Big(\Delta^*+t\Big),
\]
for every $U, V$ non-empty open sets in $X$.
\end{proposition}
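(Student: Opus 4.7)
The plan is to fix an arbitrary $t \in N(U,V)$ and show directly that $N(U,V) \in \Delta^* + t$. Introduce the non-empty open set $U' := U \cap T^{-t}(V)$. The first key observation is that $N(U', U') + t \subseteq N(U, V)$: if $x \in U'$ and $T^n x \in U'$, then $x \in U$ and $T^{n+t}x = T^t(T^n x) \in T^t(U') \subseteq V$. Consequently, by the upward closure of $\Delta^*$, it is enough to prove $N(U', U') \in \Delta^*$, since then $N(U', U')$ itself witnesses $N(U,V) \in \Delta^* + t$.

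Next I would exploit the reiterative hypercyclicity hypothesis. Pick a reiteratively hypercyclic vector $z$ for $T$; then $A := N(z, U')$ satisfies $\overline{Bd}(A) > 0$. A direct check gives $(A - A) \cap \N \subseteq N(U', U')$: for $a_2 < a_1$ in $A$, the vector $x := T^{a_2} z$ lies in $U'$ and $T^{a_1 - a_2} x = T^{a_1} z \in U'$. So it suffices to show $(A - A) \cap \N \in \Delta^*$; unwinding the definition of $\Delta^*$, this means proving that for every infinite $B \subseteq \N$ one has $(A - A) \cap (B - B) \cap \N \neq \emptyset$.

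The main technical step is the following classical density lemma: \emph{if $\overline{Bd}(A) > 0$ and $B \subseteq \N$ is infinite, then $A - A$ meets $B - B$.} The proof is a pigeonhole argument based on shift-invariance of upper Banach density. Fix $N$ with $N\, \overline{Bd}(A) > 1$ and let $b_1 < \dots < b_N$ be the first $N$ elements of $B$. If the shifts $A - b_1, \dots, A - b_N$ were pairwise disjoint in $\N$, then for any interval $J = [k+1, k+L]$,
\[
L \;\geq\; \sum_{i=1}^{N} |(A - b_i) \cap J| \;=\; \sum_{i=1}^{N} |A \cap [k+b_i+1,\, k+b_i+L]| \;\geq\; N\bigl( |A \cap [k+1, k+L]| - 2 b_N \bigr).
\]
Picking $L$ large and $k$ with $|A \cap [k+1, k+L]| \geq (\overline{Bd}(A) - \eps) L$ then contradicts $N\,\overline{Bd}(A) > 1$ as $L \to \infty$ and $\eps \to 0$. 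Hence some pair $A - b_i$, $A - b_j$ with $i < j$ shares a point $m$, giving $b_j - b_i = (m + b_j) - (m + b_i) \in (A - A) \cap (B - B) \cap \N$. Combining with the two preceding reductions yields $N(U,V) \in \Delta^* + t$, as desired.
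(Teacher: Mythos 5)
Your argument is correct and follows essentially the same route as the paper: fix $t\in N(U,V)$, pass to $U\cap T^{-t}V$, use a reiteratively hypercyclic vector to produce a set $A$ of positive upper Banach density with $(A-A)+t\subseteq N(U,V)$, and conclude from the fact that $A-A$ is a $\Delta^*$-set. The only difference is that the paper cites Furstenberg (Theorem~3.18 of his book) for this last fact, whereas you prove it inline by the standard pigeonhole argument on the shifts $A-b_1,\dots,A-b_N$; that argument is sound.
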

\begin{proof}
 Let $U, V\in \mathcal{U}(X)$ and $n\in N(U, V)$. The set $U_n=U\cap T^{-n}V$ is a non-empty open set. Since $T$ is reiteratively hypercyclic, there exists $x\in X$ such that $\overline{Bd}\left(N(x, U_n)\right)>0$.

Let $s_1, s_2\in N(x, U_n)$. We have
\[
 T^{s_1-s_2+n}(T^{s_2}x)=T^{n}(T^{s_1}x)\in V.
\]
In other words,
 \begin{equation}
 N(x, U_n)-N(x, U_n)+n\subseteq N(U, V).
  \label{ident.grivaux1}
 \end{equation}
The desired result then follows from Theorem~3.18 in~\cite{Furstenberg}, which implies that $A-A\in \Delta^*$ whenever $A\in \overline{\mathcal{BD}}$.
\end{proof}

The family $\Delta^*$ is not shift invariant ($2\N:=\{2n: n\in \N\}\in \Delta^*$ while $2\N+1\notin\Delta^*$). Hence, we cannot deduce from Proposition~\ref{propP.implies.shift.DeltaStar} that every reiteratively hypercyclic operator is a  $\Delta^*$-operator.
In fact, we are not able to answer in general the following question: is any reiteratively hypercyclic operator either a  $\Delta^*$-operator or an $\IP^*$-operator? However we can show that the answer is yes if we consider bilateral or unilateral weighted shifts.

% PROPOSITION
\begin{proposition}
\label{bilateral.rhc.implies.delta^*}
If $B_w$ is reiteratively hypercyclic on $X=\ell^p(\Z)$, $1\leq p <\infty$, or $X=c_0(\Z)$, then $B_w$ is an $\Delta^*$-operator.
\end{proposition}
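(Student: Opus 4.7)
The plan is to verify condition~(3) of Proposition~\ref{bilateral.weighted.shifts.F.oper} for the family $\F = \Delta^*$; that is, fixing $M > 0$ and $j \in \N$, I will show that $A_{M,j} \cap \bar{A}_{M,j} \in \Delta^*$. By Proposition~\ref{bilateral.weighted.shifts.F.oper} this will yield that $B_w$ is a $\Delta^*$-operator.

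The first step is to choose non-empty open sets $U$, $V$ that \emph{overlap} and whose return set lies inside $A_{M,j} \cap \bar{A}_{M,j}$. Set $\eps := 1/(M+2)$ and take
\[
U = V = \{ x \in X : \norm{x - e_j} < \eps \}.
\]
Clearly $e_j \in U \cap V$, so $U \cap V \neq \emptyset$. Suppose $n \geq 1$ lies in $N(U,V)$ and $x \in U$ satisfies $B_w^n x \in V$. The conditions $\norm{x - e_j} < \eps$ and $\norm{B_w^n x - e_j} < \eps$ imply, coordinate by coordinate, that $\abs{x_{j+n}}, \abs{(B_w^n x)_{j-n}} < \eps$ while $\abs{x_j}, \abs{(B_w^n x)_j} > 1 - \eps$ (here we use $n \geq 1$, so $j+n \neq j$ and $j-n \neq j$). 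Plugging into the identities $(B_w^n x)_j = x_{j+n}\prod_{i=j+1}^{j+n}w_i$ and $(B_w^n x)_{j-n} = x_j \prod_{i=j-n+1}^j w_i$ yields
\[
\prod_{i=j+1}^{j+n} \abs{w_i} > \frac{1-\eps}{\eps} = M+1 > M, \quad \prod_{i=j-n+1}^j \abs{w_i} < \frac{\eps}{1-\eps} = \frac{1}{M+1} < \frac{1}{M},
\]
so $n \in A_{M,j} \cap \bar{A}_{M,j}$. Hence $N(U,V) \subseteq A_{M,j} \cap \bar{A}_{M,j}$.

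For the second step I essentially replay the proof of Proposition~\ref{propP.implies.shift.DeltaStar} in the ``$n = 0$'' case, which is now available because $U \cap V$ is non-empty. Reiterative hypercyclicity of $B_w$ yields $x \in X$ with $\overline{Bd}(N(x,U)) > 0$; write $C := N(x,U)$. For any $s_2 < s_1$ in $C$ we have $B_w^{s_2} x \in U$ and $B_w^{s_1 - s_2}(B_w^{s_2} x) = B_w^{s_1} x \in U = V$, so $s_1 - s_2 \in N(U,V)$. Therefore $(C-C) \cap \Z_+ \subseteq N(U,V)$. Theorem~3.18 of~\cite{Furstenberg}, which asserts $A - A \in \Delta^*$ whenever $A \in \overline{\mathcal{BD}}$, gives $(C-C) \cap \Z_+ \in \Delta^*$; the hereditary upwardness of $\Delta^*$ then yields $N(U,V) \in \Delta^*$, and consequently $A_{M,j} \cap \bar{A}_{M,j} \in \Delta^*$.

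The decisive point is precisely to arrange $U \cap V \neq \emptyset$: if one merely fed the disjoint pair $(U,V)$ used in the proof of Proposition~\ref{bilateral.weighted.shifts.F.oper} into Proposition~\ref{propP.implies.shift.DeltaStar}, the smallest $t \in N(U,V)$ would be strictly positive and one would only conclude $N(U,V) \in \Delta^* + t$, which is strictly weaker than $N(U,V) \in \Delta^*$ because $\Delta^*$ is not shift invariant. Choosing $U = V$ aligns the shift to $\Delta^* + 0 = \Delta^*$, and this is the adjustment that lifts reiterative hypercyclicity to $\Delta^*$-transitivity for bilateral weighted shifts.
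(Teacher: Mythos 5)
Your proof is correct, and it rests on exactly the same key idea as the paper's: because $\Delta^*$ is not shift invariant, one must feed Proposition~\ref{propP.implies.shift.DeltaStar} a pair of \emph{overlapping} open sets so that the translate $\Delta^*+t$ can be taken with $t=0$, and for a weighted shift such a pair can be chosen with return set inside $A_{M,j}\cap\bar A_{M,j}$. The paper implements this by contradiction through Lemma~\ref{cond.on.open} (balls around $(M+1)e_j$ and a density argument over all open $\widetilde U\subseteq U$) combined with Lemma~\ref{UintersectsV}; your choice $U=V=B(e_j;\eps)$ with $\eps=1/(M+2)$ verifies condition~(3) of Proposition~\ref{bilateral.weighted.shifts.F.oper} directly and is a genuine streamlining, since the coordinate computation immediately gives both $\prod_{i=j+1}^{j+n}\abs{w_i}>M$ and $\prod_{i=j-n+1}^{j}\abs{w_i}<1/M$ for every $n\ge 1$ in the return set. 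The only cosmetic point to fix: since $U=V$, the index $0$ lies in $N(U,V)$ but not in $A_{M,j}$, so the correct inclusion is $N(U,V)\cap\N\subseteq A_{M,j}\cap\bar A_{M,j}$; this is harmless because $\Delta^*$ is a family on $\Z_+$ and $N(U,V)\cap\N$ still meets every $\Delta$-set, so upward heredity applies unchanged.
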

In order to prove Proposition~\ref{bilateral.rhc.implies.delta^*}, we first state two lemmas. The first one directly follows from Proposition~\ref{propP.implies.shift.DeltaStar}. % LEMMA
\begin{lemma}
\label{UintersectsV}
Let $U, V$ non-empty open sets in $X$ such that $U\cap V\neq\emptyset$, if $T$ is reiteratively hypercyclic on $X$ then $N(U, V)\in \Delta^*$.
\end{lemma}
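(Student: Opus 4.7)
The plan is to derive this lemma as a one-line specialization of Proposition~\ref{propP.implies.shift.DeltaStar}, which already gives
\[
N(U,V)\in\bigcap_{t\in N(U,V)}(\Delta^*+t)
\]
for every $U,V\in\mathcal{U}(X)$ whenever $T$ is reiteratively hypercyclic. The extra hypothesis $U\cap V\neq\emptyset$ is exactly what is needed to extract the clean conclusion $N(U,V)\in\Delta^*$ from this intersection.

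First I would observe that $U\cap V\neq\emptyset$ means precisely that $0\in N(U,V)$, since $T^{0}=\mathrm{Id}$ and hence $T^{0}(U)\cap V=U\cap V\neq\emptyset$. This allows us to specialize the intersection above to the index $t=0$ and conclude that $N(U,V)\in\Delta^*+0$.

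Finally I would unwind the definition $\mathscr{F}+k=\{A\subset\Z_+ : \exists B\in\mathscr{F} \text{ with } (B+k)\cap\Z_+\subset A\}$ in the case $k=0$: it becomes $\Delta^*+0=\{A\subset\Z_+ : \exists B\in\Delta^*,\ B\subset A\}$, which coincides with $\Delta^*$ itself because $\Delta^*$, being a family, is hereditarily upward. Therefore $N(U,V)\in\Delta^*$, as required.

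I do not foresee any genuine obstacle; the lemma is a clean specialization of the previous proposition. The only thing worth underlining is the conceptual role of the overlap hypothesis $U\cap V\neq\emptyset$: it guarantees $0\in N(U,V)$, so the trivial shift $t=0$ is available in the intersection appearing in Proposition~\ref{propP.implies.shift.DeltaStar}, which is what makes the shift-noninvariance of $\Delta^*$ harmless here.
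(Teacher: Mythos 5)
Your proof is correct and is exactly the route the paper takes: the paper states that this lemma ``directly follows'' from Proposition~\ref{propP.implies.shift.DeltaStar}, and your write-up simply makes the two implicit steps explicit, namely that $U\cap V\neq\emptyset$ gives $0\in N(U,V)$ and that $\Delta^*+0=\Delta^*$ because the family is hereditarily upward. Nothing is missing.
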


Let $X=\ell^p(\Z)$, $1\leq p <\infty$, or $c_0(\Z)$. The second lemma will rely on the non-empty open sets $U_{R,j}$ defined for every $R>1$ and every $j\in \Z$ by
 \[
 U_{R, j}=\{U\in \mathcal{U}(X): \abs{x_j}>\frac{1}{R}, \forall x\in U\}.
 \]
 In particular, we remark that if $MR>1$ then $B((M+1)e_j; \frac{1}{MR})\in U_{R, j}$, where $B(y; \epsilon)$ stands for the open ball centered at $y$ with radius $\epsilon$.
% LEMMA
\begin{lemma}
\label{cond.on.open}
Let $M>0$, $j\in \Z$ and $R>1$ such that  $MR>1$. Suppose there exists $U\in U_{R, j}$ such that for any non-empty open subset $\widetilde{U}$ of $U$ it holds  $N(\widetilde{U}, B((M+1)e_j; \frac{1}{MR}))\in \Delta^*$. Then $A_{M,j}\in \Delta^*$ and $\bar{A}_{M,j}\in \Delta^*$.
\end{lemma}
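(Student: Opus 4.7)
The plan is to reduce both conclusions $A_{M,j}\in\Delta^*$ and $\bar A_{M,j}\in\Delta^*$ to suitable choices of $\widetilde{U}\subset U$ and apply the hypothesis, exactly in the spirit of the calculation used in (2)$\Rightarrow$(3) of Proposition~\ref{bilateral.weighted.shifts.F.oper}. Indeed, fix any $m\in N(\widetilde{U},V)$ with $V=B((M+1)e_j;\tfrac{1}{MR})$ and pick $x\in\widetilde{U}$ with $B_w^mx\in V$. Looking at the $j$-th and $(j-m)$-th coordinates of $B_w^mx$, the same type of estimate \eqref{cond.concerning.j.in.F}--\eqref{cond.concerning.j.not.in.F} yields
\[
\Bigl(\prod_{i=j+1}^{j+m}|w_i|\Bigr)\cdot|x_{j+m}|>M+1-\tfrac{1}{MR}>M\quad\text{and}\quad \Bigl(\prod_{i=j-m+1}^{j}|w_i|\Bigr)\cdot|x_j|<\tfrac{1}{MR},
\]
where the condition $MR>1$ is exactly what makes these strict.

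For $\bar{A}_{M,j}$ this is enough with no further care on $\widetilde{U}$: any non-empty open $\widetilde{U}\subset U$ will do (say $\widetilde{U}=U$ itself). Since $U\in U_{R,j}$ forces $|x_j|>1/R$, the second inequality gives $\prod_{i=j-m+1}^{j}|w_i|<1/M$, i.e.\ $m\in\bar{A}_{M,j}$, for every $m\geq 1$. Thus $N(\widetilde{U},V)\cap[1,+\infty[\ \subset\ \bar{A}_{M,j}$; by hypothesis $N(\widetilde{U},V)\in\Delta^*$, and since $\Delta^*$ is a filter (Lemma~\ref{part.reg}) containing $\Z_+\setminus F$ for any finite $F$ (because $(B-B)\cap\Z_+$ is infinite for every infinite $B$), finite modifications preserve membership in $\Delta^*$; hence $\bar{A}_{M,j}\in\Delta^*$.

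For $A_{M,j}$ the only delicate point is that the first inequality gives $\prod_{i=j+1}^{j+m}|w_i|>M/|x_{j+m}|$, so we need to \emph{bound $|x_{j+m}|$ from above} uniformly in $x\in\widetilde{U}$ for $m$ eventually. This is where the choice of $\widetilde{U}$ matters, and it is the main obstacle because $U_{R,j}$ controls only the $j$-th coordinate. The plan is to exploit the coordinate decay in $c_0(\Z)$ and $\ell^p(\Z)$: pick any $y_0\in U$, fix $\delta\in(0,1)$ with $B(y_0,\delta)\subset U$, and set $\widetilde{U}:=B(y_0,\delta)$. For any $x\in\widetilde{U}$ and any integer $k$ one has $|x_k|\leq|y_{0,k}|+\delta$ (coordinates are dominated by the norm in both $c_0(\Z)$ and $\ell^p(\Z)$). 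Since $y_0\in c_0(\Z)$ or $\ell^p(\Z)$, $|y_{0,j+m}|\to 0$ as $m\to\infty$, so there exists $m_0$ with $|y_{0,j+m}|+\delta<1$ for all $m\geq m_0$. Hence for every $m\in N(\widetilde{U},V)$ with $m\geq m_0$ we get $|x_{j+m}|<1$ and consequently $\prod_{i=j+1}^{j+m}|w_i|>M/|x_{j+m}|>M$, i.e.\ $m\in A_{M,j}$.

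To finish, the hypothesis gives $N(\widetilde{U},V)\in\Delta^*$, and by the same finite-modification remark $N(\widetilde{U},V)\setminus[1,m_0-1]\in\Delta^*$; since this set is contained in $A_{M,j}$ and $\Delta^*$ is upward hereditary, $A_{M,j}\in\Delta^*$. The two arguments together prove the lemma. The only non-routine ingredient is the small ball trick with a witness $y_0$ from $c_0(\Z)$ or $\ell^p(\Z)$ to bound the ``forward'' coordinate $|x_{j+m}|$; the ``backward'' estimate is free from $U\in U_{R,j}$.
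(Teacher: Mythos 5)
Your proof is correct and follows essentially the same route as the paper's: both extract the two coordinate estimates from $B_w^m x\in B((M+1)e_j;\frac{1}{MR})$, use $|x_j|>1/R$ (from $U\in U_{R,j}$) to get $\bar A_{M,j}$, and shrink $\widetilde U$ so that $|x_{j+m}|<1$ for all large $m$ to get $A_{M,j}$, finishing with a finite-modification argument in $\Delta^*$. The only immaterial difference is that the paper takes $\widetilde U$ to be a ball of radius $1/m$ around a finitely supported vector from a fixed dense sequence, whereas you take a small ball around an arbitrary $y_0\in U$ and invoke the coordinate decay of elements of $c_0(\Z)$ or $\ell^p(\Z)$.
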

\begin{proof}
Let $(z(m))_m$ be a dense set in $X$ such that \[z(m)=(z(m)_1, \dots, z(m)_m, 0\dots)\] and $U_m=B(z(m); 1/m)$. Let $U\in U_{R, j}$ such that for any non-empty open subset $\widetilde{U}$ of $U$ we have  $N(\widetilde{U}, B((M+1)e_j; \frac{1}{MR}))\in \Delta^*$. Then there exists $m$ such that $U_m\subset U$ and hence $N(U_m, B((M+1)e_j; \frac{1}{MR}))\in \Delta^*$. Pick $r\in N(U_m, B((M+1)e_j; \frac{1}{MR}))$ with $r>m$ and $x\in U_m$ such that $B_w^rx\in B((M+1)e_j; \frac{1}{MR})$.

 Then, we have
 % EQUATION
 \begin{equation}
 \label{equation.for.j}
 \abs{\left(\prod_{i=j+1}^{j+r} w_i\right) x_{j+r}-(M+1)}<\frac{1}{MR}
 \end{equation}
and for every $t\neq j$
 % EQUATION
  \begin{equation}
 \label{equation.for.t.different.from.j}
\abs{\left(\prod_{i=t+1}^{t+r} w_i\right) x_{t+r}}<\frac{1}{MR}.
 \end{equation}
By \eqref{equation.for.j} we get,
 \[
 \abs{\prod_{i=1}^r w_{i+j}}>\abs{\prod_{i=1}^r w_{i+j}x_{r+j}}>  M,
 \]
 where the first inequality follows since $r>m$. We conclude that $N(U_m, B((M+1)e_j; \frac{1}{MR}))\setminus \{1\dots m\}\subseteq A_{M,j}$ and thus $A_{M,j}\in \Delta^*$.

  On the other hand, by \eqref{equation.for.t.different.from.j}, we get $\prod_{i=j-r+1}^j \abs{w_i x_j}<\frac{1}{MR}$, hence
  \[
  \prod_{i=j-r+1}^j \abs{w_i}\frac{1}{R}< \prod_{i=j-r+1}^j \abs{w_i x_j}<\frac{1}{MR}.
  \]
We deduce that $\prod_{i=j-r+1}^j \abs{w_i}<\frac{1}{M}$ and thus $\bar{A}_{M,j}\in \Delta^*$.

\end{proof}

\textit{Proof of Proposition \ref{bilateral.rhc.implies.delta^*}}

Suppose $B_w$ is not a $\Delta^*$-operator on $X$, then by Proposition \ref{bilateral.weighted.shifts.F.oper}, there exists $M>0$ and $j\in \Z$ such that $A_{M,j}\notin \Delta^*$ or $\bar{A}_{M,j}\notin \Delta^*$. Let $R>1$ such that $MR>1$. By Lemma \ref{cond.on.open}, it follows that
\[
\forall U\in U_{R, j} \quad \exists \widetilde{U}\subseteq U: N(\widetilde{U}, B((M+1)e_j; \frac{1}{MR}))\notin\Delta^*.
\]
Since $ B((M+1)e_j; \frac{1}{MR})\in U_{R, j}$, we can consider $U=B((M+1)e_j; \frac{1}{MR})$ and there thus exists $\widetilde{U}\subseteq U$ such that $N(\widetilde{U}, U)\notin\Delta^*$, which by Lemma~\ref{UintersectsV}, is not possible if $B_w$ is reiteratively hypercyclic. This concludes the proof of Proposition \ref{bilateral.rhc.implies.delta^*}.

Analogously, we have the following result for unilateral weighted shifts.
% PROPOSITION
\begin{proposition}
\label{rhc.implies.delta^*}
If $B_w$ is reiteratively hypercyclic on $X=\ell^p(\Z_+)$, $1\leq p <\infty$, or on $X=c_0(\Z_+)$, then $B_w$ is a  $\Delta^*$-operator.
\end{proposition}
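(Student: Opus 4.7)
The plan is to mimic the argument given for the bilateral case (Proposition~\ref{bilateral.rhc.implies.delta^*}), noting that the unilateral setting is actually simpler: by Proposition~\ref{weighted.shifts.F.oper}, a unilateral weighted shift $B_w$ is a $\Delta^*$-operator if and only if $A_{M,j}\in\Delta^*$ for every $M>0$ and every $j\in\N$, so only the ``forward'' half of the bilateral argument (the one controlling $A_{M,j}$) will be needed, and the $\bar{A}_{M,j}$ side drops out entirely.

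The first ingredient is Lemma~\ref{UintersectsV}, which is a general consequence of Proposition~\ref{propP.implies.shift.DeltaStar} (since $0\in N(U,V)$ when $U\cap V\neq\emptyset$, and $\Delta^*+0=\Delta^*$), and so transfers to $X=\ell^p(\Z_+)$ or $c_0(\Z_+)$ without modification. The second ingredient is the unilateral analogue of Lemma~\ref{cond.on.open}: for $M>0$, $j\in\N$ and $R>1$ with $MR>1$, if there exists $U\in U_{R,j}$ such that $N(\widetilde{U},B((M+1)e_j;\tfrac{1}{MR}))\in\Delta^*$ for every non-empty open subset $\widetilde{U}$ of $U$, then $A_{M,j}\in\Delta^*$. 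The proof copies the bilateral one verbatim, keeping only the computation based on \eqref{equation.for.j}: pick a countable basis $(U_m)_m$ of finitely supported neighbourhoods, find $U_m\subset U$, take $r>m$ in $N(U_m,B((M+1)e_j;\tfrac{1}{MR}))$ and a witness $x\in U_m$, and read off $|\prod_{i=j+1}^{j+r}w_i|>M$, i.e. $r\in A_{M,j}$, so that $N(U_m,B((M+1)e_j;\tfrac{1}{MR}))\setminus\{1,\dots,m\}\subseteq A_{M,j}$.

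With these two lemmas in hand I would argue by contraposition. Assume $B_w$ is not a $\Delta^*$-operator; by Proposition~\ref{weighted.shifts.F.oper} there exist $M>0$ and $j\in\N$ with $A_{M,j}\notin\Delta^*$. Pick any $R>1$ satisfying $MR>1$. The unilateral version of Lemma~\ref{cond.on.open} then produces, for every $U\in U_{R,j}$, a non-empty open $\widetilde{U}\subseteq U$ with $N(\widetilde{U},B((M+1)e_j;\tfrac{1}{MR}))\notin\Delta^*$. Applying this with the admissible choice $U=B((M+1)e_j;\tfrac{1}{MR})\in U_{R,j}$ yields $\widetilde{U}\subseteq U$ with $N(\widetilde{U},U)\notin\Delta^*$; but $\widetilde{U}\cap U=\widetilde{U}\neq\emptyset$, so Lemma~\ref{UintersectsV} forces $N(\widetilde{U},U)\in\Delta^*$, a contradiction.

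I do not expect any serious obstacle: the only points that need care are notational (using $j\in\N$ rather than $j\in\Z$, and checking that the open sets $U_{R,j}$ and the balls $B((M+1)e_j;\tfrac{1}{MR})$ still make sense and still lie in $U_{R,j}$ in the one-sided sequence space), and the observation that the half of Lemma~\ref{cond.on.open} that yields $\bar{A}_{M,j}\in\Delta^*$ is not needed in view of the one-sided characterization in Proposition~\ref{weighted.shifts.F.oper}.
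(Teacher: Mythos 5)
Your proposal is correct and follows exactly the route the paper intends: the paper proves the unilateral case only by the remark ``Analogously,'' referring to the bilateral argument via Lemma~\ref{UintersectsV} and Lemma~\ref{cond.on.open}, and your write-up is precisely that adaptation, correctly noting that Proposition~\ref{weighted.shifts.F.oper} makes the $\bar{A}_{M,j}$ half unnecessary. No gaps.
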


\begin{proposition}\label{rhc_not_udens1}
There exists a reiteratively hypercyclic operator on $c_0(\mathbb{Z}_+)$ which is not a  $\overline{\mathcal{D}}_1$-operator.
\end{proposition}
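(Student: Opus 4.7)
The plan is to recycle the weight $w$ constructed in the proof of Proposition~\ref{green} and regard the induced shift $B_w$ as an operator on $c_0(\Z_+)$ in place of $\ell^1(\Z_+)$. Since Proposition~\ref{weighted.shifts.F.oper} characterises $\F$-transitivity of a unilateral weighted backward shift by the same condition on the sets $A_{M,j}$ in both $c_0(\Z_+)$ and $\ell^p(\Z_+)$, every density computation from the proof of Proposition~\ref{green} transfers verbatim to $c_0(\Z_+)$. In particular, the equality $\overline{d}(A_1)=2/3<1$ established there, combined with condition~(5) of Proposition~\ref{weighted.shifts.F.oper} for the shift-invariant family $\overline{\mathcal{D}}_1$, shows that $B_w$ is not a $\overline{\mathcal{D}}_1$-operator on $c_0(\Z_+)$.

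It then remains to verify that the same $B_w$ is reiteratively hypercyclic on $c_0(\Z_+)$. The computation in Proposition~\ref{green} shows that each $A_M$ has bounded gaps, hence is syndetic, so in particular $\overline{Bd}(A_M)\geq \underline{Bd}(A_M)>0$ for every $M>0$. I would then invoke the characterisation of reiteratively hypercyclic unilateral weighted backward shifts on $c_0(\Z_+)$ from~\cite{BMPP}, by which reiterative hypercyclicity is equivalent to $\overline{Bd}(A_M)>0$ for every $M>0$.

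The step demanding the most care is precisely this appeal to~\cite{BMPP}: one must confirm that the cited characterisation is stated in a form applicable to $c_0(\Z_+)$ with the present normalisation of the weight. A self-contained alternative, avoiding any external citation, is to build a reiteratively hypercyclic vector directly via a Reiterative Hypercyclicity Criterion based on the finitely supported vectors as dense set and the right inverses $S_n:=S_w^n$ with $S_w e_i = w_{i+1}^{-1}e_{i+1}$; here the sup-norm estimates $\|B_w^n e_j\|_\infty$ and $\|S_n e_j\|_\infty$ are controlled by $\prod_{i=j-n+1}^j|w_i|$ and $1/\prod_{i=j+1}^{j+n}|w_i|$, and the fact that each $A_{M,j}$ contains arbitrarily long intervals (coming from the $\mathcal{A}_n$-blocks in $w$ for large $n$) together with the syndeticity of $A_{M,j}$ ensures that the relevant finite intersections $\bigcap_{j=0}^m A_{M,j}$ lie in $\overline{\mathcal{BD}}$, synchronising the return times that produce a reiteratively hypercyclic vector by a Baire-category argument.
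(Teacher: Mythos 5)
The first half of your argument --- that the shift from Proposition~\ref{green}, viewed on $c_0(\Z_+)$, is not a $\overline{\mathcal{D}}_1$-operator because $\overline{d}(A_1)=2/3<1$ and $\overline{\mathcal{D}}_1$ is shift$_-$-invariant --- is fine and is exactly the kind of step the paper itself uses. The problem is the reiterative hypercyclicity half. The characterisation you invoke, ``$B_w$ is reiteratively hypercyclic iff $\overline{Bd}(A_M)>0$ for every $M>0$,'' is not the theorem proved in \cite{BMPP} and is in fact false: a mixing weighted shift has $A_M$ cofinite for every $M$, hence $\overline{Bd}(A_M)=1$, yet the paper explicitly recalls (Subsection~\ref{ahyp}) that there exist mixing weighted shifts which are \emph{not} reiteratively hypercyclic. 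The condition $\overline{Bd}(A_M)>0$ for all $M$ characterises $B_w$ being an $\overline{\mathcal{BD}}$-\emph{operator} (i.e.\ $\overline{\mathcal{BD}}$-transitive, via Proposition~\ref{weighted.shifts.F.oper}), which is strictly weaker than reiterative hypercyclicity ($\overline{\mathcal{BD}}$-\emph{hypercyclicity}, the existence of a single vector $x$ with $\overline{Bd}(N(x,V))>0$ for all $V$). Your ``self-contained alternative'' has the same gap: showing that the sets $\bigcap_{j}A_{M,j}$ lie in $\overline{\mathcal{BD}}$ again only yields $\overline{\mathcal{BD}}$-transitivity, and there is no Birkhoff-type Baire-category argument that upgrades this to the existence of an $\overline{\mathcal{BD}}$-hypercyclic vector --- the whole point of distinguishing $\mathcal{A}$-hypercyclicity from $\F$-transitivity is that such a transfer fails. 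The correct criterion for reiterative hypercyclicity of a weighted shift on $c_0$ involves a set $A$ of positive upper Banach density whose \emph{difference set} $A-A$ is eventually absorbed by every $A_M$; for your weight the partial products return to $1$ at the end of every block, a set of Banach density about $1/3$, so verifying (or refuting) this condition requires genuine work that your proposal does not do.

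For comparison, the paper sidesteps this entirely: it takes the specific weight of \cite[Theorem~17]{BMPP} (built from the set $S=\bigcup_{j,l\ge 1}\,]l10^{j}-j,\,l10^{j}+j[$), whose reiterative hypercyclicity is already \emph{proved} in that reference, and then only checks that $\overline{d}(A_{2^{j}})<1$ for some $j$ to rule out the $\overline{\mathcal{D}}_1$ property via Proposition~\ref{weighted.shifts.F.oper}. If you want to keep your example, you would need to either verify the genuine difference-set criterion for it or replace it by an operator whose reiterative hypercyclicity is established elsewhere.
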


\begin{proof}
Let $B_w$ be the weighted shift on $c_0(\mathbb{Z}_+)$  given by
\[w_k=\left\{
\begin{array}{cl} \displaystyle{2} & \quad\mbox{if $k\in S$}\\
 {\displaystyle \prod_{\nu=1}^{k-1}w_{\nu}^{-1}} & \quad\mbox{if $k\in (S+1)\backslash S$}\\
  1 & \quad\mbox{otherwise}.
\end{array}\right.\]
where $S:=\bigcup_{j,l\ge 1}]l 10^j-j,l 10^j+j[$.
It is shown in \cite[Theorem 17]{BMPP} that $B_w$ is reiteratively hypercyclic and that
\[
\overline{d}(\{k\in \mathbb{N}:\prod_{i=1}^k|w_i|\ge 2^j\})\to 0.
\]
In particular, we deduce that there exists $j\ge 1$ such that $\overline{d}(\{k\in \mathbb{N}:\prod_{i=1}^k|w_i|\ge 2^j\})<1$ and in view of Proposition~\ref{weighted.shifts.F.oper}, we can conclude that $B_w$ is not a $\overline{\mathcal{D}}_1$-operator.
\end{proof}

Figure \ref{fig:fig2} summarizes what we know after this work.

\begin{figure}[H]
 \centering
 \includegraphics[width=0.8\textwidth]{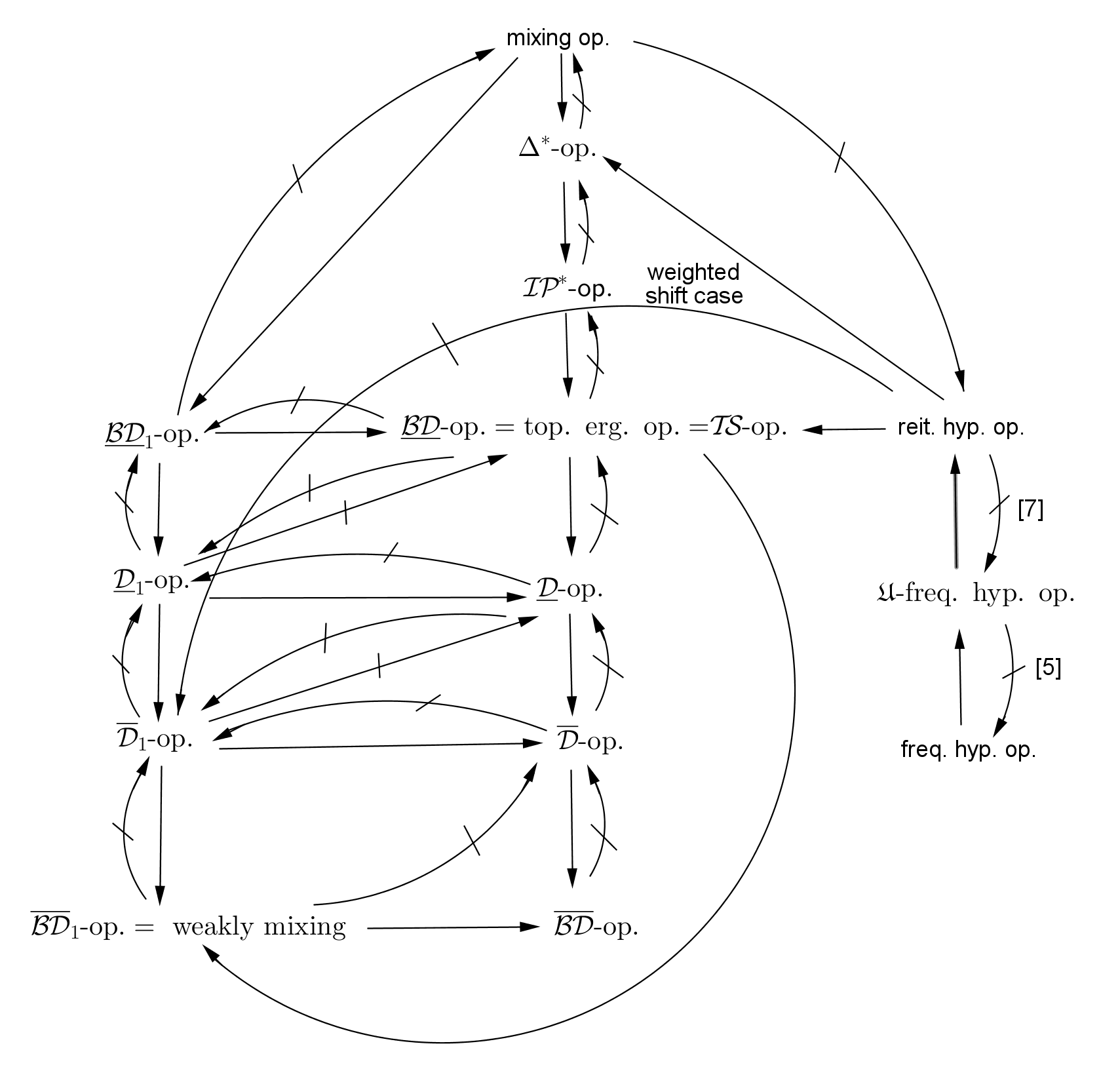}      
 \caption{Known relations}
 \label{fig:fig2}
\end{figure}

 We recall the following questions that remain open.
  \begin{question}
  Does there exist a $\underline{\mathcal{D}}$-operator which is not a $\overline{\mathcal{BD}_1}$-operator? In other words, does there exist $T\in \Lin(X)$ being a $\underline{\mathcal{D}}$-operator but not weakly mixing?
  \end{question}

  Note that if it were the case, then such operator $T$ must not be weighted shift.
\begin{question}
Is any reiteratively hypercyclic operator an $\Delta^*$-operator or an $\IP^*$-operator?
\end{question}

  % REFERENCES

\end{document}